\documentclass[12pt,a4paper]{amsart} 
\usepackage[top=35mm, bottom=35mm, left=30mm, right=30mm]{geometry}
\usepackage{xcolor}
\usepackage[colorlinks=true, citecolor=blue]{hyperref}
\usepackage{verbatim}
\usepackage[nobysame]{amsrefs}

\usepackage{mathptmx}

\usepackage{amsthm,amsfonts}

\usepackage{ulem}

\newtheorem{thm}{Theorem}[section]
\newtheorem{exmp}[thm]{Example}
\newtheorem{lem}[thm]{Lemma}
\newtheorem{prop}[thm]{Proposition}

\newtheorem{que}[thm]{Question}
\theoremstyle{definition}
\newtheorem{remark}[thm]{Remark}
\newtheorem{defn}[thm]{Definition}

\numberwithin{equation}{section}

\DeclareMathOperator{\dimh}{dim_H}

\allowdisplaybreaks[4]

\begin{document}
\title[Hausdorff dimension of   survivor set for beta-transformations ] 
{  The Hausdorff dimension of the  survivor set for beta-transformations  }

\author[R. Kuang]{Rui Kuang}
\address[R. Kuang]{School of Mathematics, South China University of Technology, Guangzhou,
510641, Guangdong, China}
\email{kuangrui@scut.edu.cn}

\author[B. Li]{Bing Li}
\address[B. Li]{School of Mathematics, South China University of Technology, Guangzhou,
510641, Guangdong, China}
\email{scbingli@scut.edu.cn}

	\author[Y. Xiao]{Yuanfen Xiao}
	\address[Y. Xiao]{School of Mathematical Sciences, Xiamen University, Xiamen,
		361005, Fujian, China}
	\email{xyuanfen@mail.ustc.edu.cn}
	
	\subjclass[2010]{Primary: 37B40; Secondary: 37B10}
	
 \keywords{beta-transformation, survivor set, Hausdorff dimension, topological entropy,   local H\"older exponent.}
	
	\maketitle

\begin{abstract}
Let $ 1<\beta< 2 $, the sequence  $\alpha(\beta)=\alpha(\beta)_1\alpha(\beta)_2\dotsb $ be the quasi-greedy $ \beta $-expansion of $ 1 $,  and $ t\in [0,1) $ be a bifurcation parameter.
The $\beta$-transformation is defined to be $T_{\beta}(x)=\beta x (mod 1) $ for $x\in [0,1)$.
The  Hausdorff dimension of the survivor set  $K(t)=\{x\in [0,1)\colon T_{\beta}^k(x)\not\in (0,t), \forall k\geq0\} $ is equal to $ -\frac{\ln\lambda}{\ln\beta} $ under the condition that
 $ \sum_{i=k}^{\infty}\frac{\alpha(\beta)_i }{\beta^i}\geq t $ for any $ k\geq 1 $,  where  $ \lambda\in (0,1) $ is the smallest positive solution of the equation
 	$\sum_{n=1}^{\infty}(\alpha(\beta)_n-t_n)x^n=1$ with $(t_n) $ being the quasi-greedy $\beta$-expansion of $t$.
  And   the local H\"older exponent of the Hausdorff  dimension function of $K(t) $ is larger than the value of the function itself.
\end{abstract}

\section{Introduction}


Let $f\colon X\to X $ be a self map.
The pair $(X,f) $ becomes an open  dynamical system by designing a subset $Y$ of the phase space $X$ as a hole.
There are two topical contents about the studying of open dynamical systems: the escape rate  and the size of the set of the points that survives from the hole $Y$.
In 1979,  Pianigiani and Yorke \cite{P1979} posed the problem about  the survivor set and how to   characterize  the limiting distributions for a ball
staying in a given measurable set on the table with a small hole.
Then in 1986, Urba\'nski \cite{Urb86,Urb87} introduced  the Hausdorff dimension and the topological entropy  of $K(t) $ for $C^2 $-expanding and orientation preserving map $g\colon S^2\to S^2 $ and  proved that they are  both continuous functions.
He also showed  that the set of bifurcation parameter is homeomorphic to the Cantor set and has zero Lebesgue measure.
After the work of Urba\'nski,
Kalle, Kong, Langeveld and Li\cite{KKLL2020}
considered this situation for the $\beta$-transformation with $1<\beta\leq 2$.
In the following, we use $T_{\beta}$ to represent the $\beta$-transformation, which  is defined to be $T_{\beta}(x)=\beta x (mod 1) $ for $x\in [0,1)$.
In 2020, Agarwal was concerned with the Hausdorff dimension of the survivor set $W_{\beta}(a,b)=\{x\in [0,1)\colon T_{\beta}^k(x)\not \in (a,b) \text{ for any } k\geq 0\} $ when $0\leq a<b\leq 1 $ vary with $\beta\geq 2 $ being an integer.
Other results on the survivor set of $\beta$-transformations  can be found in \cite{Ba2020,MR3005697,MR3233534,MR3345169} among others.

Our motivation is the work of Carminati and Tiozzo in 2017 \cite{CT17}.
They studied the Hausdorff dimension of $K(t)=\{x\in [0,1)\colon T_{\beta}^k(x)\not\in (0,t)\text{ for any } k\geq 0 \}$ and
showed that  the local H\"oldet exponent of the Hausdorff dimension of $K(t)$ is equal to the dimension itself in  the $\beta$-transformation with $\beta\geq 2$ being an integer.
The corresponding $\beta$-shift plays an important role in studying the dynamical properties for $\beta$-transformations.
Let \[
\gamma=\left \{\begin{array}{ll}
	\beta-1 & \text{if }  \beta \text{ is an integer}\\
	\lfloor \beta \rfloor & \text{otherwise}
\end{array} \right. .
\]
In fact, the number $\beta$ being  an integer or not really matters.
When $\beta>1$ is an integer, the $\beta$-transformation is continuous (regarding $[0,1)$ as a circle) and its corresponding $\beta$-shift is just the fullshift over $\beta$ symbols.
But in the case when $\beta>1$ is not an integer,   the $\beta$-transformation is no longer continuous.
This brings a series of problems that we can not adopt the some method used in the case when $\beta$ is an integer.
And even more, its corresponding $\beta$-shift is a proper subsystem of fullshift over $\gamma$ symbols, which means that we can not identify every element in this  $\beta$-shift.

The first main result of this article is about the the Hausdorff dimension of $K(t) $  with $1<\beta<2$.
The method we used here is computing the topological entropy of $K(t)$ and gain the Hausdorff dimension through the relationship between the dimension and the entropy about $K(t)$.
\begin{thm}\label{main1}
 	Let $ 1<\beta< 2 $ and $ t\in \mathcal{U} $.
 	If $ \sum_{i=k}^{\infty}\frac{\alpha(\beta)_i }{\beta^i}\geq t $ for any $ k\geq 1 $, where $ \alpha(\beta)=\alpha(\beta)_1\alpha(\beta)_2\dotsb $ is the quasi-greedy $ \beta $-expansion of $ 1 $, then  $\dimh(K(t)) $ is equal to $ -\frac{\ln\lambda}{\ln\beta} $ with $ \lambda\in (0,1) $ being the smallest positive solution of the equation
 	\[
 	\sum_{n=1}^{\infty}(\alpha(\beta)_n-t_n)z^n=1.
 	\]
 	If the equation has no root in $ (0,1) $, then the Hausdorff dimension of $ K(t) $ is zero.
 \end{thm}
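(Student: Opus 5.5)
The plan is to pass to symbolic dynamics and compute the topological entropy of the survivor shift. Hausdorff dimension of a $T_\beta$-invariant closed set of this type equals its entropy divided by $\ln\beta$, as the paper's strategy indicates. Let $\Sigma_\beta$ be the $\beta$-shift, i.e.\ sequences $(x_i)$ with $\sigma^n(x)\preccurlyeq\alpha(\beta)$ for all $n\ge0$. Then $K(t)$ corresponds, up to countably many points (the preimages of $0$, which do not affect dimension), to
\[
\Sigma(t)=\{x\in\Sigma_\beta\colon (t_i)\preccurlyeq\sigma^n(x)\preccurlyeq\alpha(\beta)\ \text{for all } n\ge0\}.
\]
Since $T_\beta$ is piecewise linear with slope $\beta$, the standard covering argument by cylinders of length $\beta^{-n}$ gives $\dimh K(t)=h_{top}(\Sigma(t))/\ln\beta$. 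For the upper bound, cover by the admissible $n$-cylinders. For the lower bound, use that $\Sigma(t)$ contains subshifts of finite type of almost full entropy, whose projections are self-similar sets satisfying the open set condition. So everything reduces to showing $h_{top}(\Sigma(t))=-\ln\lambda$.

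To count words, I would build a lexicographic recursion. Let $N_n$ be the number of words of length $n$ appearing in $\Sigma(t)$. A sequence lies in $\Sigma(t)$ exactly when each of its tails lies lexicographically between $(t_i)$ and $\alpha(\beta)$. Decompose an admissible sequence by its first return: the word agrees with a prefix of $\alpha(\beta)$ for $k-1$ symbols and then at position $k$ drops strictly below $\alpha(\beta)_k$. Since the alphabet is $\{0,1\}$, this happens only when $\alpha(\beta)_k=1$ and the digit is $0$. The lower constraint requires that the block starting at each position stays above $(t_i)$. The number of admissible choices at level $k$ is then governed by $\alpha(\beta)_k-t_k$. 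This yields a renewal identity of the form
\[
N_n \approx \sum_{k=1}^{n}(\alpha(\beta)_k-t_k)\,N_{n-k}+\text{boundary terms},
\]
and hence the generating function $\sum_n N_n z^n$ has a singularity at the smallest positive root $\lambda$ of $\sum_{n\ge1}(\alpha(\beta)_n-t_n)z^n=1$. It follows that $\lim\frac1n\ln N_n=-\ln\lambda$. If there is no root in $(0,1)$, the counts grow subexponentially and the entropy, hence the dimension, is zero.

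The main obstacle is justifying this renewal decomposition: the upper constraint from $\alpha(\beta)$ and the lower constraint from $(t_i)$ must be independent at each cut point. This is where the hypothesis $\sum_{i\ge k}\alpha(\beta)_i\beta^{-i}\ge t$ enters. Read symbolically, it should give that every tail of $\alpha(\beta)$, suitably shifted, dominates $(t_i)$. Then following the prefix of $\alpha(\beta)$ never violates the lower bound, and following the prefix of $(t_i)$ never violates the upper bound, since $t\in\mathcal U$ gives that the shifts of $(t_i)$ stay above $(t_i)$ and below $\alpha(\beta)$. I would first prove these two compatibility lemmas in terms of the quasi-greedy expansions. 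I would then verify that concatenating a prefix $\alpha(\beta)_1\dots\alpha(\beta)_{k-1}0$ (or a prefix where $(t_i)$ rises above $t_k$) with any admissible sequence is again admissible. This gives matching upper and lower bounds for $N_n$ up to polynomial factors. The delicate part is the cases where $\alpha(\beta)_k=t_k$, which contribute zero, and making sure no admissible word is double counted there.
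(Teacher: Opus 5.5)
There is a genuine gap, and it sits in the renewal step, which carries all the content of the theorem. The symbolic reduction to $\Sigma(t)=\mathcal{K}(\mathbf{t},\alpha(\beta))$ and the relation $\dimh K(t)=h_{top}/\ln\beta$ are fine; the paper simply cites Raith for the latter. Your two compatibility lemmas are also fine: they are exactly what the paper checks, namely that $\mathbf{t}$ and $\alpha(\beta)$ themselves lie in $\mathcal{K}(\mathbf{t},\alpha(\beta))$.

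The first problem is that the coefficients $\alpha(\beta)_k-t_k$ equal $-1$ whenever $\alpha(\beta)_k=0$ and $t_k=1$. So they cannot be numbers of admissible choices, and no splitting of words into disjoint classes with free continuations can produce $N_n\approx\sum_k(\alpha(\beta)_k-t_k)N_{n-k}$.

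The second problem is that independence at cut points is false. The compatibility conditions only say that \emph{following} $\alpha(\beta)$ or $\mathbf{t}$ is harmless. After a deviation at position $k$, a tail $\alpha(\beta)_j\cdots\alpha(\beta)_{k-1}0x_{k+1}\cdots$ with $2\le j\le k$ may agree with a prefix of $\mathbf{t}$. It then either forbids the cut or imposes a bound $x_{k+1}x_{k+2}\cdots\succeq\sigma^{i}(\mathbf{t})$, which admissibility of the continuation does not give.

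Here is a concrete case satisfying all hypotheses. Let $\beta$ be the tribonacci number, so $\alpha(\beta)=(110)^\infty$, and take $\mathbf{t}=(01)^\infty$. Then $\Sigma(t)$ is the subshift of finite type forbidding $00$ and $111$, and $(\alpha(\beta)_n-t_n)$ has period block $1,0,0,0,1,-1$.
\begin{itemize}
\item At $k=5$ the coefficient is $1$, yet $11010$ followed by the admissible sequence $(01)^\infty$ contains $00$.
\item At $k=2$ the coefficient is $0$, yet $10$ followed by any admissible sequence beginning with $1$ is admissible. This class grows exponentially rather than ``contributing zero''.
\item At $k=6$ the coefficient is $-1$.
\end{itemize}

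The series $1-\sum(\alpha(\beta)_n-t_n)z^n$ is the kneading determinant of a Lorenz-type map. It appears only after cancellations between the two boundary orbits, so a first-deviation count with nonnegative weights cannot reach it.

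The paper does not count words at all. It sets $\mathbf{a}=0\mathbf{u}$ and $\mathbf{b}=1\mathbf{t}$. In Proposition~\ref{subshift} it shows that $\mathcal{K}(\mathbf{t},\mathbf{u})=E(\mathbf{a},\mathbf{b})\cap[\sigma(\mathbf{b}),\sigma(\mathbf{a})]$. It also shows that every orbit of the hole shift $E(\mathbf{a},\mathbf{b})$ other than $0^\infty,1^\infty$ eventually enters $\mathcal{K}$, so the two entropies coincide. It then uses your compatibility conditions to check $\mathbf{a},\mathbf{b}\in E(\mathbf{a},\mathbf{b})$. Finally it quotes Proposition~\ref{lemma3} (Lemma 3 of Barnsley--Steiner--Vince), whose series $\sum(b_n-a_n)z^{n-1}$ is exactly $1-\sum(u_n-t_n)z^n$.

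To keep a self-contained route you would need one of the following:
\begin{itemize}
\item a genuinely signed count, such as inclusion--exclusion over the two boundary orbits or a Markov-extension computation;
\item or, in the positive-entropy case, a semiconjugacy to a constant-slope map $x\mapsto e^{h}x+c\bmod 1$, whose two critical itineraries give $\sum(u_n-t_n)e^{-hn}=1$ directly. This must be supplemented by a separate proof that $e^{-h}$ is the smallest positive root, and by a treatment of the zero-entropy case.
\end{itemize}
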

And the second result,   we   study   the local H\"older exponent of the Hausdorff dimension of $K(t)$ with $1<\beta<2$.
\begin{thm}\label{main2}
    Let $ 1<\beta<2 $, $ t\in \mathcal{U} $ and    $ \eta(t)=\dimh(K(t)) $.
 	If $ \sum_{i=k}^{\infty}\frac{\alpha(\beta)_i }{\beta^i}\geq t $ for any $ k\geq 1 $, where $ \alpha(\beta)=\alpha(\beta)_1\alpha(\beta)_2\dotsb $ is the quasi-greedy $ \beta $-expansion of $ 1 $,
  then the local H\"older exponent of $ \eta(t) $ is larger than $ \eta(t)  $.
\end{thm}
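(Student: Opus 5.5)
We prove Theorem~\ref{main2} by combining the explicit formula of Theorem~\ref{main1} with a comparison of the power series attached to nearby parameters. Write
\[
F_t(z)=\sum_{n\ge1}(\alpha(\beta)_n-t_n)z^n ,
\]
so that $\eta(t)=-\ln\lambda(t)/\ln\beta$, where $\lambda(t)$ is the smallest root of $F_t(z)=1$ in $(0,1)$. The local H\"older exponent at $t$ is $\liminf_{s\to t}\ln|\eta(s)-\eta(t)|/\ln|s-t|$. We must therefore show that $|\eta(s)-\eta(t)|\le C|s-t|^{\eta(t)+\delta}$ for some $\delta>0$, at least along the relevant scales. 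If $\eta(t)=0$ the claim reduces to $\eta$ being locally H\"older with a positive exponent near $t$, handled the same way. So we assume $\lambda=\lambda(t)\in(0,1)$.

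\textbf{Step 1: reduce to a statement about expansions.} For $s$ close to $t$, let $n=n(s)$ be the length of the common prefix of $(t_i)$ and $(s_i)$. Since these are quasi-greedy $\beta$-expansions, $|s-t|\ge c\,\beta^{-n-m}$ for some bounded correction $m$. The admissibility constraints in $\mathcal U$ together with the hypothesis $\sum_{i\ge k}\alpha(\beta)_i\beta^{-i}\ge t$ should control how long a run of identical digits can follow the first discrepancy, and this is what makes $m$ bounded. The hypothesis also guarantees that the survivor set is described by the lexicographic constraint $\sigma^k(x)\succeq (t_i)$ within the $\beta$-shift, which is exactly what is used in Theorem~\ref{main1}. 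Up to constants, then, $|s-t|\approx\beta^{-n}$.

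\textbf{Step 2: perturb the root.} $F_s$ and $F_t$ agree in their first $n$ coefficients, and all coefficients lie in $\{-1,0,1\}$. Hence $|F_s(z)-F_t(z)|\le 2z^{n+1}/(1-z)$ on $[0,\lambda+\epsilon]$. The derivative satisfies $F_t'(\lambda)\ge c>0$: since $\lambda$ is the smallest positive root and $F_t(0)=0<1$, it is a transversal crossing, with a nondegeneracy argument if needed. The mean value theorem then gives
\[
|\lambda(s)-\lambda(t)|\le C\lambda^{n}.
\]
Consequently $|\eta(s)-\eta(t)|\le C'\lambda^n=C'\beta^{-n\eta(t)}$.

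\textbf{Step 3 (main obstacle): gain the strict improvement.} Step 2 only yields exponent $\eta(t)$, matching the Carminati--Tiozzo equality case. To get a strictly larger exponent we must exploit the non-integer $\beta$. The $\beta$-shift is a proper subshift, so the tails $(t_{n+i})$ and $(s_{n+i})$ cannot differ freely. Both must satisfy $\sigma^k(\cdot)\preceq\alpha(\beta)$, and the constraint from the hypothesis forces the first discrepancy to be followed by a forced block of length proportional to $n$. We would first try to show that the discrepancy of $F_s-F_t$ starts not at index $n+1$ but effectively at index about $(1+\kappa)n$, through cancellation between the $\alpha(\beta)$ part and the $t$ part. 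Alternatively, the ratio $|s-t|\approx\beta^{-n}$ could be sharpened to $\beta^{-n}$ while the root shift is $\lambda^{n'}$ with $n'>n$. Either route gives $|\eta(s)-\eta(t)|\le C\,|s-t|^{\eta(t)(1+\kappa)}$, which yields the theorem.

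The delicate point is proving this forced block lemma uniformly in $s$. We would attempt it via the lexicographic characterization of $\mathcal U$ and of quasi-greedy expansions.
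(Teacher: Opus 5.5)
Your Steps 1 and 2 are essentially the paper's entire proof.

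\begin{itemize}
\item Lemma~\ref{yj} compares $|v-u|$ with $\beta^{-m(v,u)}$ for $v,u\in\mathcal U\cap[t_0,1]$.
\item The mean value theorem applied to $P_v(\lambda_1)-P_u(\lambda_1)=\lambda_1^{m+1}R(v,u)$ (your $F$ is the paper's $P$) gives $|\lambda(v)-\lambda(u)|\le C\lambda_1^{m}$.
\item $\eta=-\ln\lambda/\ln\beta$ is bi-Lipschitz in $\lambda\in[1/\beta,1]$.
\item Parameters outside $\mathcal U$ are absorbed because $K$ is constant on the intervals $I_r^*$ of Proposition~\ref{stable-set}.
\end{itemize}

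This gives $|\eta(t')-\eta(t)|\le C|t'-t|^{\eta(t)}$, i.e.\ $\alpha(\eta,t)\ge\eta(t)$, and nothing more.

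The genuine gap is Step~3, which is a hope rather than an argument. The theorem's hypothesis, read as $\sigma^k(\alpha(\beta))\succeq\mathbf t$ for all $k$, only places $\alpha(\beta)$ in $\mathcal K(\mathbf t,\alpha(\beta))$ so that Theorem~\ref{entropy} applies. After the first discrepancy, the digits of a nearby parameter $\mathbf s$ are restricted only by the fixed, $n$-independent conditions $\sigma^k(\mathbf s)\succeq\mathbf s$ and $\sigma^k(\mathbf s)\preceq\alpha(\beta)$. Conditions of that kind cannot force a block of length $\kappa n$.

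Nor is there cancellation to exploit in general. Suppose $s_{n+1}>t_{n+1}$, $s_k\ge t_k$ for all $k\ge n+2$, and Theorem~\ref{main1} applies at $s$. Then every coefficient of $P_s-P_t$ beyond index $n$ is $\le 0$, so $P_s(\lambda(t))\le 1-\lambda(t)^{n+1}$. This forces $\lambda(s)-\lambda(t)\ge c\,\lambda(t)^{n+1}\ge c\,\lambda(t)\,|s-t|^{\eta(t)}$, so along such $s$ the exponent $\eta(t)$ is attained. This is the kind of behaviour behind the equality $\alpha(\eta,t)=\eta(t)$ in the integer case of Carminati and Tiozzo, and you give no reason why non-integer $\beta$ should exclude it.

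The paper does not prove strictness either. Its final chain produces ``$>$'' by comparing the additive constants $\ln(1/\ln\beta)$ and $-\ln(\gamma/\delta)$ in the numerators. After division by $\ln|t'-t|\to-\infty$ these contribute nothing to the $\liminf$, so its argument establishes only $\alpha(\eta,t)\ge\eta(t)$. Its closing Example in fact presents $\alpha(\eta,t)>\eta(t)$ as a special feature, arising there because $\eta$ is locally constant.

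The sound course is to drop Step~3 and present Steps~1--2 as a proof of $\alpha(\eta,t)\ge\eta(t)$, tightening three points.

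\begin{enumerate}
\item The lower bound $|s-t|\ge(t_0/\beta)\beta^{-n}$ is the direction you need; the paper invokes the upper bound at this spot, which is the wrong one. Its source is not a vague control on runs but the argument of Lemma~\ref{yj}. For $s,t\in\mathcal U\cap[t_0,1]$ all iterates stay in $[t_0,1]$. So after the first discrepancy, $T_\beta^n s$ and $T_\beta^n t$ lie in different components of $T_\beta^{-1}[t_0,1]$, which are at distance at least $t_0/\beta$. This needs $s\in\mathcal U$, so the reduction of a general $s$ via the intervals $I_r^*$ must come first.
\item $\lambda$ being the smallest root of $P_t=1$ with $P_t(0)=0$ only gives $P_t'(\lambda)\ge0$. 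Simplicity of the root needs a separate argument. The paper's claim $P_t'>0$ on $(0,1)$ is not evident, since coefficients $-1$ occur.
\item The case $\eta(t)=0$ is not ``handled the same way'' for a strict claim, because at $\lambda=1$ the bound $C\lambda^n$ is vacuous. For the non-strict claim it is trivial, since $\alpha(\eta,t)\ge0$ always.
\end{enumerate}
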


The article is organized as follows.
We  introduce some basic knowledge about greedy $\beta$-expansions and quasi-greedy $\beta$-expansions in Section 2.
In Section 3, we discuss the structures of the stable set and the bifurcation set.
In Section 4, we provide the proof of Theorem \ref{main1} and Section 5 is devoting to studying the local H\"older exponent of the Hausdorff dimension of the survivor set by giving the proof of Theorem \ref{main2}.

\section{Preliminaries}

\subsection{Notations on sequences and words}
Let  $ N\geq 1 $ be an integer.
We denote a sequence in $ \{0,1,\dotsc, N\}^{\mathbb{N}} $ by $ (x_i)=x_1x_2\dotsb $ with $x_i\in \{0,1,\dotsc, N\} $ for any $ i\geq 1 $.
Let $ \sigma \colon \{0,1,\dotsc, N\}^{\mathbb{N}} \to \{0,1,\dotsc, N\}^{\mathbb{N}} $  be the shift map such that $ \sigma((x_i))=(x_{i+1}) $.
 A finite string of elements from $ \{0,1,\dotsb,N\} $, is called a word.
%
A word   $ \omega\in \{0,1,\dotsc, N\}^{n} $ is said to be a prefix of a sequence $ (x_i)\in\{0,1,\dotsc, N\}^{\mathbb{N}} $ if $ x_1x_2\dotsb x_n=\omega  $.
Let $ \omega=\omega_1\omega_2\dotsb\omega_n $ be a word with length $ n $.
We use the symbol $ |\omega|=n $ to denote the length of the word $ \omega $.

In order to compare two different sequences, we will use the lexicographical ordering through out this whole article.
A sequence $ (\omega_i)=\omega_1\omega_2\dotsb $ is called lexicographically less than another different sequence $ (\eta_i)=\eta_1\eta_2\dotsb $,  writing as $ (\omega_i)\prec (\eta_i) $, if $ \omega_n<\eta_n $ with $ n=\min\{i\geq 1\colon \omega_i\neq \eta_i \} $.
We write $ (x_i)\preceq  (y_i) $ if $ (x_i)\prec  (y_i) $ or $ (x_i) =(y_i)$.
The lexicographical ordering can also be extended to words.
Let $ u $ and $ v $ be two words in $ \{0,1,\dotsc,\gamma\}^* $.
Then the symbol $ u\prec  v $ means $ u0^{\infty}\prec  v0^{\infty} $.
In the sequel, we will frequently denote a sum by virtue of a sequence or a word.
Fixed a real number $ \beta>1 $.
For a sequence $ (\omega_i)=\omega_1\omega_2\dotsb $ or a word $ u_1u_2\dotsb u_n $, we denote the sum $ \sum_{i=1}^{\infty} \frac{\omega_i}{\beta^i}$ by $ .\omega_1\omega_2\dotsb $ and the sum $ \sum_{i=1}^{n}\frac{u_i}{\beta^i} $  by $ .u_1u_2\dotsb u_n $.

\subsection{$ \beta $-transformation }
Let $ \beta>1 $ be a real number and
\[
\gamma=\left \{\begin{array}{ll}
	\beta-1 & \text{if }  \beta \text{ is an integer}\\
	\lfloor \beta \rfloor & \text{otherwise}
\end{array} \right. .
\]
The (greedy) $ \beta $-transformation is defined to be $ T_{\beta}\colon [0,1)\to [0,1) $ such that $ T_{\beta}(x)=\beta x-\lfloor \beta x\rfloor  $.
For any $ x\in [0,1) $, the (greedy) $\beta $-transformation provides an algorithm to obtain the greedy $ \beta$-expansion of $ x $, which is  denoted by $ b(x,\beta)=(b_i(x,\beta)) $ with
$ b_i(x,\beta)=\lfloor \beta T_{\beta}^{i-1}(x) \rfloor  $ for any $ i\geq 1 $.
It is clear that  $ b(x,\beta) $ is a sequence in $ \{0,1,2,\dotsc,\gamma\}^{\mathbb{N}} $.
Define a map $ \pi\colon \{0,1,\dotsc,\gamma\}^{\mathbb{N}} \to [0,1] $ such that $ \pi_{\beta}((x_i))=\sum _{i=1}^{\infty}\frac{x_i}{\beta^i} $.
It is not hard to see  that
$ x=\sum_{i=1}^{\infty}\frac{b_i(x,\beta) }{\beta^i} $ and in every step from this algorithm we choose the largest integer, which is the origin of the name ``greedy''.
In fact, by the (greedy) $ \beta $-transformation, it follows that $ 1=\frac{\gamma}{\beta}+\sum_{i=2}^{\infty}\frac{ \lfloor\beta T^{i-1}_{\beta}(1) \rfloor}{\beta^i} $, which implies that $ \beta-\gamma= \sum_{i=1}^{\infty}\frac{ \lfloor\beta T^{i}_{\beta}(1) \rfloor}{\beta^i} $.
It is  not hard to show that $ b_i(\beta-\gamma,\beta) $ is equal to $ \frac{ \lfloor\beta T^{i}_{\beta}(1) \rfloor}{\beta^i}  $ for any $ i\geq 1 $.
Therefore, we can write $ b(1,\beta)$ for the sequence $\gamma b(\beta-\gamma,\beta) $.

\begin{lem}\label{lem1}
Let  $ \beta>1 $ and  $ \gamma=\left \{\begin{array}{ll}
\beta-1 & \text{if }  \beta \text{ is an integer}\\
\lfloor \beta \rfloor & \text{otherwise}
\end{array} \right. $.
Each $ x\in (0,1] $ admits exactly one  expansion $ x=\sum_{i=1}^{\infty}\frac{a_i(x)}{\beta^i} $ such that the sequence  $\{a_i(x)\}_{i=1}^{\infty}$ is not eventually zero and $0\leq  a_{i}(x)\leq \gamma $.  	
\end{lem}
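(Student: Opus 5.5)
The statement has to be read with care. For non-integer $\beta$ a point $x\in(0,1]$ generally has many expansions with digits in $\{0,\dots,\gamma\}$, so "exactly one" cannot hold over all not-eventually-zero expansions. I read the lemma as asserting that among such expansions there is exactly one distinguished one, the \emph{quasi-greedy} expansion, which is the lexicographically largest. The plan is to construct it by a quasi-greedy algorithm and then prove its uniqueness through this maximality. The only arithmetic input is $\beta-1\le\gamma<\beta$, equivalently $\gamma\ge\beta-1$ and $\beta-\gamma\le 1$, which follows directly from the definition of $\gamma$.

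For existence, fix $x\in(0,1]$ and define digits inductively with remainders $r_0=x$ and $r_n=x-\sum_{i\le n}a_i\beta^{-i}$. Let $a_{n+1}$ be the largest $a\in\{0,\dots,\gamma\}$ with $a\beta^{-(n+1)}<r_n$. I will prove by induction that $0<r_n\le\beta^{-n}$; the case $n=0$ holds since $0<x\le1$.
\begin{itemize}
\item Positivity is automatic: $a_{n+1}=0$ is always admissible because $r_n>0$, and the choice is strict, so $r_{n+1}>0$.
\item If $a_{n+1}=\gamma$, then $r_{n+1}\le\beta^{-n}-\gamma\beta^{-n-1}=(\beta-\gamma)\beta^{-n-1}\le\beta^{-n-1}$.
\item If $a_{n+1}<\gamma$, maximality gives $r_n\le(a_{n+1}+1)\beta^{-n-1}$, hence $r_{n+1}\le\beta^{-n-1}$.
\end{itemize}
Thus $r_n\to0$, and $x=\sum a_i\beta^{-i}$.

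To see the sequence is not eventually zero, suppose $a_i=0$ for all $i>N$. Then $r_N=r_n$ for all $n\ge N$, and $a_{n+1}=0$ forces $r_n\le\beta^{-n-1}$. Letting $n\to\infty$ gives $r_N=0$, contradicting positivity.

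For uniqueness, let $(b_i)$ be any expansion of $x$ with digits in $\{0,\dots,\gamma\}$ that is not eventually zero, and let $n$ be the first index with $b_n\ne a_n$.
\begin{itemize}
\item Suppose $b_n>a_n$. Since the tail of $(b_i)$ is not eventually zero, $\sum_{i>n}b_i\beta^{-i}>0$. Hence $b_n\beta^{-n}<r_{n-1}$, which contradicts the maximal choice of $a_n$.
\item Therefore $b_n<a_n$, so $(b_i)\prec(a_i)$. Hence $(a_i)$ is the unique lexicographically largest infinite expansion, which is the exact content of the lemma.
\end{itemize}
If one wants the stronger literal reading, it holds only when $\beta$ is an integer. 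In that case a second infinite expansion would force $b_n\le a_n-1$ together with a tail sum $\le\gamma\sum_{i>n}\beta^{-i}=\beta^{-n}$, and equality would require the tail of $(a_i)$ to vanish, which is impossible.

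The main obstacle is the bound $r_{n+1}\le\beta^{-n-1}$ in the case $a_{n+1}=\gamma$. This is exactly where $\beta-\gamma\le1$, that is the definition of $\gamma$, is used. The other delicate point is the strict inequality in the digit choice, which is what rules out eventually-zero outputs such as the finite greedy expansion of $1$.
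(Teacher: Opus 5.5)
Your argument is correct, and it takes a different route from the paper. The paper builds the infinite expansion by patching the greedy one. If $b(x,\beta)$ is not eventually zero it is kept. Otherwise the last nonzero digit $b_M$ is lowered by one and the sequence $\varepsilon^*_\beta$ is appended; this is the greedy expansion of $1$, periodized as $(b_1\cdots b_{N-1}(b_N-1))^\infty$ when that expansion is finite. The paper's proof then stops after remarking that all digits are at most $\gamma$, and contains no uniqueness argument at all. You instead run the quasi-greedy algorithm directly with a strict inequality. You control it through the invariant $0<r_n\le\beta^{-n}$, which is exactly where $\beta-\gamma\le 1$ is used. You then prove the uniqueness statement that is actually true: the output is the lexicographically largest not-eventually-zero expansion. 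Your remark that literal uniqueness holds only for integer $\beta$ is right; for the golden mean, $1=.(10)^\infty=.01^\infty$. Your reading also matches how the paper uses the lemma, since it immediately calls this expansion the quasi-greedy one. The paper's construction has the advantage of displaying the formula $\widetilde b(x,\beta)=b_1\cdots b_{M-1}(b_M-1)\alpha(\beta)$, which it uses afterwards. This formula also falls out of your algorithm. The strict and non-strict digit choices agree until the greedy remainder first vanishes, at step $M$. There yours picks $b_M-1$ and leaves remainder exactly $\beta^{-M}$, after which it reproduces $\alpha(\beta)$ by scale invariance. So your version is self-contained and supplies the characterization that the paper's proof omits.
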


\begin{proof}
We first define a special sequence as follow.
Let $b(1,\beta)=b_1(1,\beta)b_2(1,\beta)\dotsb   $ be the greedy $ \beta $-expansion of $ 1 $ with $ b_i(1,\beta)=\lfloor\beta T_{\beta}^{i-1}(1)\rfloor $ for any $ i\geq 1 $.
If the sequence $  b(1,\beta) $ is not eventually zero, then define $ \varepsilon_{\beta}^*(i)=b_i(1,\beta)  $ for any $ i\geq 1 $.
Otherwise, pick $ N=\sup\{i\geq 1\colon  b_i(1,\beta)>0 \} $ and $
N<+\infty $.
 Then we define $ \varepsilon_{\beta}^*(i) $ as follow,
 \[
   \varepsilon_{\beta}^*(i)=\left\{
   \begin{array}{ll}
     b_k(1,\beta) & \text{if } i=k(\mathrm {mod}\, N) \text{ and } k\in \{1,2,\dotsb,N-1\}\\
     b_N(1,\beta)-1 & \text{if } i =0(\mathrm{mod}\, N)
   \end{array}
   \right. .
   \]
  It is not hard to calculate that the sum $ \sum_{i=1}^{\infty}\frac{\varepsilon_{\beta}^*(i)}{\beta^i} $ is equal to $ 1 $.
  Now, for any fixed  $ x\in [0,1) $, if the greedy $ \beta $-expansion of $ x $ is not eventually zero, then $ a_i(x)=b_i(x,\beta)=\lfloor\beta T_{\beta}^{i-1}(x) \rfloor $ for any $ i\geq 1 $.
  For another case, pick $ M=\sup\{i\geq 1\colon \lfloor\beta T_{\beta}^{i-1}(x) \rfloor>0 \} $, then we know that
    $ \lfloor\beta T_{\beta}^{i-1}(x) \rfloor $ is zero for any $ i> M $.
    We choose
  \[
  a_i(x)=\left\{
  \begin{array}{ll}
   \lfloor\beta T_{\beta}^{i-1}(x) \rfloor & i\in\{1,2,\dotsb,M-1\}\\
   \lfloor\beta T_{\beta}^{i-1}(x) \rfloor-1 & i=M\\
   \varepsilon_{\beta}^*(k) & i=M+k \text{ and } k\geq 1
  \end{array}
  \right.,
  \]
   which ends the proof because it is not hard to see that for any $ i\geq 1 $, the value of $ a_i(x) $ is not larger than $ \gamma$.
\end{proof}


In fact, the not eventually zero expansion for $ x $ in the previous lemma is called the quasi-greedy $ \beta $-expansion of $ x $.
We denote the quasi-greedy $ \beta $-expansion of $ x $ by $ \widetilde{b}(x,\beta) $ and write $ \alpha(\beta)=\widetilde{b}(1,\beta) $.
Using these notations, the previous lemma says that if $ b(x,\beta)=b_1b_2\dotsb b_n0^{\infty} $ with $ b_n\geq 1 $, then $ \widetilde{b}(x,\beta) =b_1b_2\dotsb (b_n-1)\alpha(\beta)$.
Fixed $ \beta>1 $,
the following lemma, which can be found in \cite{Par60},
 shows that the quasi-greedy $ \beta $-expansion of $ 1 $, i.e., the sequence $ \alpha(\beta) $, can be used to describe the set of all the greedy $ \beta $-expansions for $ x\in [0,1) $.

 \begin{lem}\label{333}
 	Let $ \beta>1 $ and  $ \gamma=\left \{\begin{array}{ll}
 		\beta-1, & \text{if }  \beta \text{ is an integer}\\
 		\lfloor \beta \rfloor, & \text{otherwise}
 	\end{array} \right. $.
 	Then the sequence $ (\omega_i )\in \{0,1,\dotsc,\gamma \}^{\mathbb{N}} $ is a greedy $ \beta $-expansion of some $ x\in [0,1) $ if and only if   $   \sigma^k((\omega_i )) \prec \alpha(\beta)$ holds for any $ k\geq 0 $.
 \end{lem}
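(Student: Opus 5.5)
We prove the two implications separately, working throughout with the greedy algorithm $b_i(x,\beta)=\lfloor \beta T_\beta^{i-1}(x)\rfloor$ and the quasi-greedy expansion $\alpha(\beta)$ from Lemma \ref{lem1}. The basic tool is the following comparison fact: for $(\omega_i)\in\{0,\dots,\gamma\}^{\mathbb N}$ with $\sigma^k((\omega_i))\preceq\alpha(\beta)$ for all $k\ge 0$, we have $\sum_{i\ge 1}\omega_i\beta^{-i}\le 1$. Moreover, whenever a word $u$ satisfies $u\prec\alpha(\beta)_1\cdots\alpha(\beta)_{|u|}$ coordinatewise at the first difference, the tail beyond the difference contributes strictly less than the corresponding tail of $\alpha(\beta)$.

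To prove the comparison fact, we define the first index $n_1$ where $(\omega_i)$ differs from $\alpha(\beta)$, so that $\omega_{n_1}<\alpha(\beta)_{n_1}$. Since $\alpha(\beta)_{n_1}-\omega_{n_1}\ge 1$, this gives
\[
\sum_{i\ge1}\omega_i\beta^{-i}\le \sum_{i<n_1}\alpha(\beta)_i\beta^{-i}+(\alpha(\beta)_{n_1}-1)\beta^{-n_1}+\beta^{-n_1}\sum_{i\ge1}\omega_{n_1+i}\beta^{-i}.
\]
We then iterate on the shifted sequence $\sigma^{n_1}((\omega_i))$ and use $\sum_i \alpha(\beta)_i\beta^{-i}=1$. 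This yields the bound $\le 1$ in the limit. Strictness holds as long as at least one difference occurs and the iteration does not run along $\alpha(\beta)$ forever. If the sequence eventually coincides with $\alpha(\beta)$, the first strict drop still makes the total strictly less.

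\textbf{Necessity.} Let $(\omega_i)=b(x,\beta)$ with $x\in[0,1)$. Since $\sigma^k b(x,\beta)=b(T^k_\beta x,\beta)$, it suffices to show $b(y,\beta)\prec\alpha(\beta)$ for every $y\in[0,1)$. We argue by induction on the first index of difference. If $b(y,\beta)$ agrees with $\alpha(\beta)$ up to position $n-1$ and $b_n(y,\beta)>\alpha(\beta)_n$, then
\[
y\ge \sum_{i<n}\alpha(\beta)_i\beta^{-i}+(\alpha(\beta)_n+1)\beta^{-n}\ge\sum_{i\le n}\alpha(\beta)_i\beta^{-i}+\beta^{-n}\sum_{i\ge1}\alpha(\beta)_{n+i}\beta^{-i},
\]
using that the tail sum is at most $1$ by the comparison fact applied to $\sigma^n\alpha(\beta)$. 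This gives $y\ge 1$, a contradiction. Equality $b(y,\beta)=\alpha(\beta)$ is impossible since it would force $y=1$. The needed property that $\alpha(\beta)$ itself satisfies $\sigma^n\alpha(\beta)\preceq\alpha(\beta)$ follows from the construction in Lemma \ref{lem1}: $\alpha(\beta)$ is either $b(1,\beta)$, which arises from iterating $T_\beta$, or a periodic modification of it.

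\textbf{Sufficiency.} Given $(\omega_i)$ with all shifts $\prec\alpha(\beta)$, we set $x=\pi_\beta((\omega_i))$, which lies in $[0,1)$ by the strict comparison. We show the greedy digits are the $\omega_i$ by induction: we need $\omega_1\le\beta x<\omega_1+1$. The lower bound is clear. The upper bound reads $\sum_{i\ge1}\omega_{1+i}\beta^{-i}<1$, which is the strict comparison applied to $\sigma((\omega_i))$. Then $T_\beta x=\pi_\beta(\sigma(\omega_i))$, and the induction proceeds.

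The main obstacle is the strictness in the comparison fact. It requires handling sequences that agree with $\alpha(\beta)$ after finitely many strict drops, and it relies on the self-admissibility $\sigma^n\alpha(\beta)\preceq\alpha(\beta)$. Alternatively, this step can simply be cited from \cite{Par60}, as the paper does.
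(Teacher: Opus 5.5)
The paper does not prove this lemma; it quotes Parry's theorem. Your proposal replaces the citation with the standard self-contained proof, and it is correct. It has three parts: a comparison lemma (if all shifts are $\preceq\alpha(\beta)$ then $\pi_\beta(\omega)\le 1$, strictly if $\omega\ne\alpha(\beta)$); necessity, by contradiction at the first index where $b(y,\beta)$ would exceed $\alpha(\beta)$; and sufficiency, by checking that the greedy algorithm returns the $\omega_i$ because $0\le\pi_\beta(\sigma^k\omega)<1$. The citation buys brevity, while your route shows exactly which properties of $\alpha(\beta)$ are used, so make them explicit.

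Strictness is not the obstacle you describe. Once $\pi_\beta(\sigma^{n_1}\omega)\le 1$ is known, your displayed inequality gives $\pi_\beta(\omega)\le\sum_{i\le n_1}\alpha(\beta)_i\beta^{-i}$, which is $<1$ simply because $\alpha(\beta)$ is not eventually zero. This is the one place where the quasi-greedy rather than the greedy expansion matters. For example, with $\beta=\frac{1+\sqrt5}{2}$ one has $b(1,\beta)=110^\infty$, and $\omega=10110^\infty\ne 110^\infty$ has all shifts $\preceq 110^\infty$, yet $\pi_\beta(\omega)=1$.

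The limiting argument for the bound $\le 1$ should be written out as $\pi_\beta(\omega)\le 1-\beta^{-N_k}+\beta^{-N_k}\pi_\beta(\sigma^{N_k}\omega)$ with $N_k=n_1+\dots+n_k$. This uses $\sum_{i\le n}\alpha(\beta)_i\beta^{-i}\le 1$ at each drop and $\pi_\beta\le\gamma/(\beta-1)$, and it needs no self-admissibility of $\alpha(\beta)$.

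Self-admissibility (Lemma~\ref{alpha}, which the paper also imports) enters only in necessity, to get $\pi_\beta(\sigma^n\alpha(\beta))\le 1$. Your one-line justification of it is only a pointer. A real proof needs monotonicity of the greedy algorithm applied to $T_\beta^n(1)<1$, plus the digit comparison $b_{j+1}\cdots b_{N-1}(b_N-1)\prec b_1\cdots b_{N-j}$ in the periodic case. Alternatively, you can read $\pi_\beta(\sigma^n\alpha(\beta))\le 1$ directly off the construction in Lemma~\ref{lem1} and avoid self-admissibility altogether.
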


Let  $\Sigma_{\beta}$ be the collection of all the greedy $\beta$-expansion of $x\in [0,1)$, i.e.,
\begin{align*}
\Sigma_{\beta}&=\{ (\omega_i) \in \{0,1,\dotsc, \gamma \} ^{\mathbb{N}} \colon  \text{ there exists } x \in [0,1)
\text{ such that } (\omega_i)=b(x,\beta) \} \\
&=\{(\omega_i) \in \{0,1,\dotsc, \gamma \} ^{\mathbb{N}} \colon 0^{\infty}\preceq  \sigma^k((\omega_i))\prec \alpha(\beta) \text{ for any } k\geq 0     \}.
\end{align*}
Let $ \widetilde{\Sigma}_{\beta} $ be the collection of all quasi-greedy $ \beta $-expansions of $ x\in (0,1] $.
From Lemma~\ref{lem1} and Lemma~\ref{333},  it follows that
$$ \widetilde{\Sigma}_{\beta}=\{(\omega_i)\in \{0,1,\dotsc, \gamma \} ^{\mathbb{N}}  \colon 0^{\infty}\prec  \sigma^k((\omega_i))\preceq \alpha(\beta)  ,\forall k\ge0\} .$$
The following result can be found in \cite{KKLL2020}, \cite{Par60}. 

\begin{lem}
	Let $ \beta>1 $.
	The map $ x\mapsto b(x,\beta) $ is a strictly increasing bijection from $ [0,1) $ to $ \Sigma_{\beta} $. 
	The map $ x\mapsto \widetilde{b}(x,\beta) $ is a strictly increasing bijection from $ (0,1] $ to $ \widetilde{\Sigma}_{\beta} $.
\end{lem}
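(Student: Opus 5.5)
Both assertions will be proved the same way. I will show that the coding map is strictly increasing, that it is injective, and that its image is the stated set; since the map lands in that set by construction, this gives a bijection.

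\emph{The greedy map.} Fix $x\in[0,1)$. By construction, $\sigma^k(b(x,\beta))=b(T_\beta^k x,\beta)$. By Lemma~\ref{333}, $b(x,\beta)\in\Sigma_\beta$, so the map is well defined into $\Sigma_\beta$. Surjectivity is exactly the ``if'' direction of Lemma~\ref{333}: every sequence in $\Sigma_\beta$ is the greedy expansion of some $x\in[0,1)$.

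For strict monotonicity, take $0\le x<y<1$ and let $n$ be the first index with $b_n(x,\beta)\ne b_n(y,\beta)$; such an $n$ exists because $x=\pi_\beta(b(x,\beta))$. Writing $u=b_1\dotsb b_{n-1}$ for the common prefix, the point $T_\beta^{n-1}$ acts affinely on the cylinder of $u$: $T^{n-1}_\beta x=\beta^{n-1}(x-.u)$, and similarly for $y$. Hence $T^{n-1}_\beta x<T^{n-1}_\beta y$. Since $\lfloor\cdot\rfloor$ is nondecreasing, $b_n(x,\beta)\le b_n(y,\beta)$, and because the digits differ the inequality is strict. Thus $b(x,\beta)\prec b(y,\beta)$, which also gives injectivity. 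The key point is that the first $n-1$ iterates of $x$ and $y$ choose the same digits, so they stay on the same affine branches.

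\emph{The quasi-greedy map.} I use the explicit description from Lemma~\ref{lem1}: for $x\in(0,1]$, $\widetilde b(x,\beta)$ is the unique expansion of $x$ that is not eventually zero, with digits in $\{0,\dots,\gamma\}$. The same expansion is produced by the map $\widetilde T_\beta(x)=\beta x-\lceil\beta x\rceil+1$ on $(0,1]$, with digit $\lceil \beta x\rceil-1$. I will check this case by case using the formula for $a_i(x)$ in that proof.

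With this description:
\begin{enumerate}
\item Monotonicity and injectivity follow exactly as above, replacing floor by $\lceil\cdot\rceil-1$, which is also nondecreasing.
\item Membership in $\widetilde\Sigma_\beta$ holds because every shift $\sigma^k(\widetilde b(x,\beta))$ is the quasi-greedy expansion of a point of $(0,1]$. It is therefore not eventually zero, giving $0^\infty\prec$, and it is $\preceq\alpha(\beta)$ by the monotonicity just proved applied at the point $1$.
\item For surjectivity, take $(\omega_i)\in\widetilde\Sigma_\beta$ and set $x=\pi_\beta(\omega)$.
\end{enumerate}

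The main obstacle is surjectivity: showing that such an $(\omega_i)$ is actually the quasi-greedy expansion of $x$, rather than some other non-greedy expansion. My plan is as follows. First, $x>0$ because $(\omega_i)$ is not eventually zero. Next, I show $\pi_\beta(\sigma^k\omega)\le 1$ for all $k$ by the standard Parry estimate: split $\omega$ into blocks compared against prefixes of $\alpha(\beta)$, using that $\pi_\beta(\alpha(\beta))=1$ and that the tails of $\alpha(\beta)$ are bounded by $\alpha(\beta)$.

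Then I argue digit by digit. Suppose $\omega_1\ne\widetilde b_1(x,\beta)$.
\begin{itemize}
\item If $\omega_1<\widetilde b_1$, the tail bound gives $x\le(\omega_1+1)/\beta$. This forces $\widetilde b_1\le\omega_1$, with equality of values only in the excluded eventually-zero case, which is a contradiction.
\item If $\omega_1>\widetilde b_1$, we get $x> \omega_1/\beta$, because the tail is not eventually zero and hence has positive value. This contradicts the quasi-greedy digit being the largest digit $d$ with $d/\beta<x$.
\end{itemize}

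Induction on the shifts then finishes the proof. Alternatively, I can reduce to the greedy case: apply Lemma~\ref{333} after converting an eventually-periodic tail equal to $\alpha(\beta)$ back via Lemma~\ref{lem1}, and handle the points where $\sigma^k\omega=\alpha(\beta)$ separately.
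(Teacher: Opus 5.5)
\paragraph{Comparison with the paper.} Your proof is correct, but it does not follow the paper: the paper gives no argument for this lemma and only refers to Parry \cite{Par60} and \cite{KKLL2020}. Your proof is a self-contained alternative.

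For the greedy map you prove strict monotonicity directly. A common prefix forces the same affine branch, so $T_\beta^{n-1}x<T_\beta^{n-1}y$, and $\lfloor\cdot\rfloor$ then orders the first differing digit. Well-definedness and surjectivity you take from Lemma~\ref{333}, so that half still rests on Parry's theorem exactly as the paper does.

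For the quasi-greedy map you do more than the paper. You realize $\widetilde b(x,\beta)$ as the itinerary of the left-continuous map $\widetilde T_\beta(x)=\beta x-\lceil\beta x\rceil+1$ on $(0,1]$. This gives monotonicity and $\sigma^k\widetilde b(x,\beta)=\widetilde b(\widetilde T_\beta^k x,\beta)$ for free. Your surjectivity argument, the telescoping bound $\pi_\beta(\sigma^k\omega)\le 1$ followed by comparison with $\lceil\beta x\rceil-1$, then proves the lexicographic description of $\widetilde\Sigma_\beta$. The paper only asserts that description ``from Lemma~\ref{lem1} and Lemma~\ref{333}''.

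The case check you promise does go through. It has three cases: an infinite greedy expansion, a finite one (where $\widetilde T_\beta^{M}x=1$ after the digit $b_M-1$), and the expansion of $1$ itself, finite or not. In each case it reproduces the formula for $a_i(x)$ and $\varepsilon^*_\beta$.

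Two simplifications:
\begin{itemize}
\item The Parry estimate needs only $\pi_\beta(\alpha(\beta))=1$ and nonnegativity of tails, not $\sigma^n\alpha(\beta)\preceq\alpha(\beta)$.
\item In the case $\omega_1<\widetilde b_1$, the bound $\beta x\le\omega_1+1$ gives $\lceil\beta x\rceil-1\le\omega_1$ immediately. The remark about ``equality of values'' is unnecessary.
\end{itemize}

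\paragraph{A correction: delete the word ``unique''.} The uniqueness asserted in Lemma~\ref{lem1} is false for non-integer $\beta$. For the golden mean $\beta=\frac{1+\sqrt5}{2}$, both $(10)^\infty$ and $01^\infty$ are non-eventually-zero expansions of $1$ with digits in $\{0,1\}$. The quasi-greedy expansion is the lexicographically largest such expansion, and the paper's proof of Lemma~\ref{lem1} only constructs it; it does not prove uniqueness. If uniqueness held, your ``main obstacle'' (surjectivity) would be trivial, so that sentence contradicts the rest of your proposal.

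Your argument survives, because it never actually uses uniqueness. You identify $\widetilde b$ with the $\widetilde T_\beta$-itinerary through the explicit formula, and you prove surjectivity digit by digit. Make sure that identification is carried out by the explicit case check and not shortcut by appealing to uniqueness.
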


\subsection{The Hausdorff dimension }

Fix a real number $ s>0 $ and a subset $ E\subset X $.
Let $ \delta>0 $. Then the $ s $-dimensional Hausdorff measure of $ E $ is given by
\[
\mathcal{H}^s(E)=\lim_{\delta\to 0}\inf\{\sum_{i=1}^{\infty}\text{diam}(E_i)^s\colon E\subset \bigcup_{i=1}^{\infty}E_i \text{ and } \text{diam}(E_i)<\delta \  \text{for all } i\geq 1   \},
 \]
where $ \text{diam}(A) $ means the diameter of the set $ A $.
We now  define the Hausdorff dimension of the set $ E $ is defined  as follows,
\[
\text{dim}_H(E)=\Big\{
	\begin{array}{ll}
		\inf\{s>0\colon \mathcal{H}^s(E)<+\infty \},& \text{ if }\{s>0\colon \mathcal{H}^s(E)<+\infty \}\neq \emptyset; \\
		+\infty ,& \text{ otherwise. }
	\end{array}
\Big.
\]

\section{The bifurcation set }

Let $ \mathcal{P}([0,1))=\{A\colon A\subset [0,1)\} $. We define a set-valued function $ K \colon [0,1]\to  \mathcal{P}([0,1)) $  as the following for $0<t<1$,
$$ K(t)=\{x\in [0,1)\colon T_{\beta}^k(x)\not \in (0,t)\text{ for any } k\geq 0\} .$$
It is nature to set $ K(0) =[0,1)$, $ K(1)=\{0\} $. Clearly, $K(t')\subseteq K(t)$ if $t'\geq t$.

A parameter $ t\in [0,1] $ is said to be a bifurcation parameter if $t=0,1$ or $0<t<1$ and for any $ \delta>0 $, there exists some $ t'\in(t-\delta,t+\delta) $ such that $ K(t)\not =K(t') $.
Set
\[
 \mathcal{U}=\{t\in [0,1)\colon t \text{ is a    bifurcation parameter} \}.
\]
It is easy to see that $K(\cdot)$ is locally constant at $t$ for $t\not\in \mathcal{U}$. Such a $t\not\in \mathcal{U}$ is called stable. In \cite{KKLL2020} the authors consider the right set valued bifurcation set $E_{\beta}=\{t\in[0,1):K(t')\neq K(t) \text{ for any } t'>t \}.$
The following lemma shows that the two notions of bifurcation sets are equivalent.

\begin{lem}\label{K}
	 Let $ t\in [0,1)$. Then
\[t \text{ satisfying that } t\in K(t)\Leftrightarrow t\in E_{\beta}\Leftrightarrow t\in \mathcal{U}.\]
\end{lem}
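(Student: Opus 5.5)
I will prove the chain of equivalences by establishing (a) $t\in K(t)\Rightarrow t\in E_\beta$, (b) $t\in E_\beta\Rightarrow t\in\mathcal U$, and (c) $t\in\mathcal U\Rightarrow t\in K(t)$. The case $t=0$ is handled separately: $0\in K(0)$, $0\in\mathcal U$ by definition, and $0\in E_\beta$ because $K(t')\subsetneq[0,1)$ for every $t'>0$ (points of $(0,t')$ are removed). So assume $0<t<1$. The basic tool is the monotonicity $K(t')\subseteq K(t)$ for $t'\ge t$, together with the observation that $x\in K(t)$ iff the whole forward orbit of $x$ avoids $(0,t)$.

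For (a), suppose $t\in K(t)$. For any $t'>t$, the point $t$ itself lies in $(0,t')$, so $t\notin K(t')$ while $t\in K(t)$. Hence $K(t')\neq K(t)$ for all $t'>t$, i.e.\ $t\in E_\beta$. For (b), if $t\in E_\beta$ then for every $\delta>0$ any $t'\in(t,t+\delta)$ gives $K(t')\ne K(t)$, so $t$ is a bifurcation parameter, i.e.\ $t\in\mathcal U$. Both directions are immediate.

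The substantive step is (c), which I will prove by contraposition: assume $t\notin K(t)$ and show $K(\cdot)$ is constant on a neighbourhood of $t$. Since $t\notin K(t)$, there is $k\ge1$ (as $t\notin(0,t)$, $k\ne0$) with $T_\beta^k(t)\in(0,t)$; fix the least such $k$. First, I will show $K(t')=K(t)$ for $t'$ slightly larger than $t$. If $x\in K(t)\setminus K(t')$, then some $T_\beta^m(x)=y\in[t,t')$. Choosing $t'$ close enough to $t$ that $T_\beta^k$ is continuous and increasing with slope $\beta^k$ on $[t,t')$ (possible since $T_\beta^j(t)\in(0,1)$ is never a discontinuity point of $T_\beta$ for $j<k$: the orbit point is nonzero, and right-continuity of $T_\beta$ handles the remaining case), we get $T_\beta^{m+k}(x)=T_\beta^k(y)\in[T_\beta^k(t),T_\beta^k(t)+\beta^k(t'-t))\subset(0,t)$ for $t'$ small. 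This contradicts $x\in K(t)$. For $t'<t$ close to $t$, I need to show no $x\in K(t')$ has an orbit point in $[t',t)$. The same argument works with left limits, but the main obstacle is exactly here: $T_\beta$ is only right-continuous, so if some $T_\beta^j(t)$, $j<k$, equals a discontinuity point $i/\beta$, the left limit of $T_\beta^{j+1}$ at $t$ is $1$ rather than $0$. I will handle this by tracking the left-limit orbit $\lim_{s\uparrow t}T^k_\beta(s)$. If the orbit hits $0$ from the right, then $T_\beta^k(t)$ would be $0$, not in $(0,t)$, so before step $k$ only the left-limit jump issue arises. If the left-limit orbit at time $k$ still lands in $(0,t)$, the argument goes through. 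Otherwise I will use the quasi-greedy expansion $\widetilde b(t,\beta)$ and Lemma~\ref{lem1} to describe the left-limit orbit symbolically, and show that it also enters $(0,t)$ at some finite time. Alternatively I would invoke the equivalence of $E_\beta$ with the left-sided notion, citing \cite{KKLL2020}.

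Combining both sides, $K$ is locally constant at $t$, so $t\notin\mathcal U$. This completes the cycle of equivalences.
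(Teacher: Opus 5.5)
Your overall structure is sound and the right-hand side of (c) is fine, but the left-hand side ($t'<t$) is left unfinished. You worry that some $T_\beta^j(t)$ with $j<k$ equals a discontinuity point $i/\beta$. You then offer only an unexecuted analysis of the ``left-limit orbit'' via $\widetilde b(t,\beta)$, or a citation of \cite{KKLL2020}.

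The two situations you separate, ``the orbit hits $0$'' and ``the left-limit jump'', are in fact the same situation. If $T_\beta^j(t)\in\{0,1/\beta,\dots,\gamma/\beta\}$ for some $0\le j<k$, then $T_\beta^{j+1}(t)=0$, so $T_\beta^k(t)=0\notin(0,t)$, contradicting the choice of $k$. Hence each $T_\beta^j(t)$, $0\le j<k$, lies in the open set $(0,1)\setminus\{1/\beta,\dots,\gamma/\beta\}$. Near such a point, $T_\beta(y)=\beta y-\lfloor\beta T_\beta^j(t)\rfloor$, so $T_\beta^k$ is affine with slope $\beta^k$ on a two-sided neighbourhood of $t$.

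The left side is then the mirror image of your right side. Suppose $x\in K(t')\setminus K(t)$ has an orbit point $y\in[t',t)$. Then $T_\beta^k(y)\in[T_\beta^k(t)-\beta^k(t-t'),T_\beta^k(t))\subset(0,t')$ as soon as $t-t'<\min\{\beta^{-k}T_\beta^k(t),\,t-T_\beta^k(t)\}$ and $t'$ lies in the affine neighbourhood. This contradicts $x\in K(t')$. With this observation your argument is complete, and the detour through quasi-greedy expansions should be dropped. (On the right side no such care is needed at all, since $T_\beta$ is affine with slope $\beta$ on a right-neighbourhood of every point of $[0,1)$.)

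Compared with the paper, the route differs in form rather than substance. The paper proves $t\in\mathcal U\Rightarrow t\in K(t)$ by choosing $t_n\to t$ with $K(t_n)\neq K(t)$ and splitting by the side on which infinitely many $t_n$ lie. On the right it passes to the limit along orbit points $z_n\downarrow t$ using right-continuity. On the left it argues by contradiction using that $T_\beta^m$ is continuous at $t$ whenever $T_\beta^m(t)\in(0,t)$; it asserts this without proof, and it is true precisely by the observation above. Your contrapositive rests on the same two facts but is slightly more informative: it produces an explicit neighbourhood on which $K(\cdot)$ is constant and avoids the subsequence bookkeeping.

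Your separate treatment of $t=0$ is actually needed. The paper's justification of $t\in K(t)\Rightarrow t\in E_\beta$ (``$t$ is in $K(t)$ but not in $K(t')$'') fails at $t=0$, since $0\in K(t')$ for every $t'$. Your argument via points of $(0,t')$ works there.
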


\begin{proof}
	 It is obviously that $t \text{ satisfying that } t\in K(t)\Rightarrow t\in E_{\beta}$ since the number  $ t $ is  in $ K(t) $ but not in $ K(t') $ for any $t'>t$.
It also follows that $t\in E_{\beta}\Rightarrow t\in \mathcal{U}$  by the definitions of the two sets.

Now we show $t\in \mathcal{U}\Rightarrow t \text{ satisfying that } t\in K(t).$
	Let $ t $ be a bifurcation parameter. For any $ \varepsilon>0 $, there exists some $ t'\in[t-\varepsilon,t)\cup(t,t+\varepsilon] $ such that $ K(t')\neq K(t) $.
	 For $\varepsilon_n=\frac{1}{n} $,
	 when $ t_n $ is in $ (t-\varepsilon_n,t)\cup(t,t+\varepsilon_n) $,  there exists $ x_n $  in $ \big( K(t_n)\setminus K(t)\big)\cup \big(K(t)\setminus K(t_n) \big)  $.
	 There will be two kinds of different cases.

	 If there exist  infinite $ n\in\mathbb{N} $ such that  $ t_n  \in (t,t+\varepsilon_n] $, then there exists some $ l_n\geq 0 $ such that $ T_{\beta}^{l_n}(x_n)\in [t,t_n) $ while $ T_{\beta}^l(x_n)\in [t,1] $ for any $ l\geq 0 $.
	 Set $ z_n=T_{\beta}^{l_n}(x_n
	 ) $.
	 Observe that $ z=\lim_{n\to\infty} z_n=t $ and $T_{\beta}^m(t)=T_{\beta}^m(z)=T_{\beta}^m( \lim_{n\to\infty} z_n  )\in [t,1]$ for any $ m\geq 0 $ by the right continuity of $ \beta $-transformation, which shows that $ t $ is in $ K(t) $.	
	
	 Otherwise, for the case when there are infinite $ n\in\mathbb{N} $ such that  $ t_n  \in   (t-\varepsilon_n,t) $, then there exists some $ k_n\geq 0 $ such that $ T_{\beta}^{k_n}(x_n)\in [t_n,t) $ while $ T_{\beta}^k(x_n)\in [t_n,1] $ for any $ k\geq 0 $.
	 Set $ y_n=T_{\beta}^{k_n}(x_n) $. Then $ T_{\beta}^m(y_n)=T_{\beta}^{m+k_n}(x_n)\geq t_n $ and $ \lim_{n\to\infty} y_n=t $.
	 Assuming that $ t\not\in K(t) $, there exists $ m> 0 $ such that $ T_{\beta}^m(t)\in (0,t) $.
	 Fixing the number $ m$, we have that  the map $ T_{\beta}^m $ is continuous at the point $ t $.
	So $T_{\beta}^m(t)=\lim_{n\to\infty}T_{\beta}^m( y_n  )\geq t $,
	which is contract to $ T_{\beta}^m(t)\in (0,t) $.
\end{proof}

Next we study the structure of the set of stable points $[0,1]\setminus\mathcal{U}$. We call $ r\in (0,1) $ a $ \beta $-rational number if $ r $ admits a finite greedy $ \beta $-expansion.
Denote the set of $ \beta $-rational numbers by $ \mathbb{Q}_{(\beta)} $.
Let $ r $ be a  $ \beta $-rational number and $  b(r,\beta)= r_1r_2\dotsb r_m0^\infty=r_1r_2\dotsb r_m00\dotsb $ be its finite greedy $ \beta $-expansion with $ r_m $ being the last non-zero element.
We define an interval $ I_r=(.r_1r_2\dotsb r_m,.\overline{r_1r_2\dotsb r_m }) $,  where the notation $ .\overline{r_1r_2\dotsb b_m } $ means the number $ \overline{r}=.r_1r_2\dotsb r_mr_1r_2\dotsb r_mr_1r_2\dotsb r_m\dotsb $.
It needs to clarify that the sequence $  \overline{r_1r_2\dotsb r_m }=r_1r_2\dotsb r_mr_1r_2\dotsb r_m\dotsb $ may not be  in $ \Sigma_{\beta} $.
For example,   when $ \beta=\frac{3+\sqrt{13}}{2} $, the quasi-greedy $ \beta $-expansion of $ 1 $ is $ 3030303030\dotsb $.
According to Lemma~\ref{333},  we know that the sequence
  $ 230^{\infty} $ can be the greedy $ \beta $-expansion of some real number $ r $ in $ [0,1) $, while $ \overline{r}=.\overline{23} $ is not in $ \Sigma_{\beta} $ because  $\alpha(\beta)\prec \sigma(\overline{r})=323232\dotsb  $.

If $\overline{r_1r_2\dotsb r_m } \not\in \Sigma_{\beta}$ then $j:=\inf\{k\ge0:\sigma^k \overline{r_1r_2\dotsb r_m }\succeq  \alpha(\beta)\}\in\{0,1,2,\dotsb,m-1\}$.
Since it always hold that $\gamma w_1w_2\dotsb\succeq  w_1w_2\dotsb$ we have $j\ge1$ and $r_j<\gamma$ if $r_1<\gamma$.
We define a new sequence $\widetilde{r}$ as $\widetilde{r}:=\alpha(\beta)$ if $r_1=\gamma$; $\widetilde{r}:=r_1r_2\dotsb r_{j-1}(r_j+1)0^\infty$ if $r_1<\gamma$.
If $\widetilde{r}=r_1r_2\dotsb r_{j-1}(r_j+1)0^\infty\not\in \Sigma_{\beta}$ we repeat the above method to get a new sequence $\widetilde{r}^{(1)}=r_1r_2\dotsb r_{j'-1}(r_{j'}+1)0^\infty$
where $j'=\inf\{k\ge0:\sigma^k \widetilde{r}\succeq  \alpha(\beta)\}\in\{1,2,\dotsb,j-1\}$. The process will be stopped after finite repeating.
We denote the final sequence by $\widetilde{r}'$.
That is $\overline{r_1r_2\dotsb r_m }\rightarrow \widetilde{r}\rightarrow\widetilde{r}^{(1)}\rightarrow\widetilde{r}^{(2)}\rightarrow\dotsb\rightarrow\widetilde{r}^{(k)}=\widetilde{r}'\in \Sigma_{\beta}.$
For instance, let $1<\beta<2$ and $0<r<1$ satisfy that $\alpha(\beta)=1101$, $b(r,\beta)=0110011$, then $\overline{0110011}\rightarrow 011010^\infty\rightarrow 10^\infty.$

\begin{defn}
For $r\in \mathbb{Q}_{(\beta)} $, $  b(r,\beta)= r_1r_2\dotsb r_m000\dotsb $ with $r_m\not=0$. We define the interval $ I_r^*:=(r,r^*)$ where
$ {r^*}=\left \{\begin{array}{ll}
 		.\overline{r_1r_2\dotsb r_m }, & \text{if }    \overline{r_1r_2\dotsb r_m } \in \Sigma_{\beta}\\
 		.\widetilde{r}', & \text{otherwise }
 	\end{array} \right. .$
\end{defn}

\begin{lem}\label{K1}
 Let $ t\in[0,1)\setminus\mathcal{U} $. Then there exists some $ r\in \mathbb{Q}_{(\beta)} $ such that $ t\in I_r^* $. Moreover the two endpoints of $ I_r^* $ are both in $ \mathcal{U} $.
\end{lem}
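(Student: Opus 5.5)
Everything will be worked out in symbolic language, using the monotone bijection $x\mapsto b(x,\beta)$ between $[0,1)$ and $\Sigma_\beta$ together with Lemma~\ref{K}, which says $t\in\mathcal U$ if and only if $t\in K(t)$. In sequence terms, $t\in\mathcal U$ if and only if every shift of $b(t,\beta)$ is either $0^\infty$ or $\succeq b(t,\beta)$.

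\textbf{Choosing $r$.} Fix $t\in[0,1)\setminus\mathcal U$; then $t>0$ and $t\notin K(t)$. Let $n\ge1$ be the least integer with $T_\beta^n(t)\in(0,t)$. In sequence terms, with $b(t,\beta)=t_1t_2\dotsb$, we have $0^\infty\prec\sigma^n(b(t,\beta))\prec b(t,\beta)$, while $\sigma^k(b(t,\beta))$ is $0^\infty$ or $\succeq b(t,\beta)$ for $0<k<n$. Consider the prefix $t_1\dotsb t_n$. Let $r$ be the $\beta$-rational with $b(r,\beta)=t_1\dotsb t_m0^\infty$, where $m\le n$ is the position of the last nonzero digit of the prefix. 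Then $r\le t$, and $r<t$ because the tail $\sigma^n b(t,\beta)\ne0^\infty$.

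\textbf{Showing $t<r^*$.} If $\overline{t_1\dotsb t_m}\in\Sigma_\beta$, I compare $b(t,\beta)$ with $\overline{t_1\dotsb t_m}$. They agree up to position $n$ (after padding zeros in positions $m+1,\dots,n$), and the tail $\sigma^n b(t,\beta)\prec b(t,\beta)$ should force $b(t,\beta)\prec\overline{t_1\dotsb t_m}$. This needs a short induction comparing successive blocks of length $n$. I expect the padding zeros to require choosing $r$ more carefully: possibly take the minimal period, that is, the first $n$ at which the orbit enters $(0,t)$. If $\overline{t_1\dotsb t_m}\notin\Sigma_\beta$, then $r^*=.\widetilde r'$, and each step of the modification $\widetilde r\to\widetilde r^{(1)}\to\cdots$ only increases the sequence lexicographically. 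Hence $\widetilde r'\succeq\overline{t_1\dotsb t_m}$ at the level of the admissible comparison, and $t<r^*$ follows. This is where Lemma~\ref{333} enters: $b(t,\beta)$ is admissible, so it is compared with the smallest admissible sequence exceeding the periodic word.

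\textbf{Endpoints in $\mathcal U$.} For $r$, the sequence is $t_1\dotsb t_m0^\infty$. I must show each shift $\sigma^k$ with $0<k<m$ is $\succeq t_1\dotsb t_m0^\infty$. This comes from the minimality of $n$: $\sigma^k b(t,\beta)\succeq b(t,\beta)$ for those $k$ with $t_{k+1}\ne0$, and truncation preserves this on the first $m-k$ symbols. If $\sigma^k$ agreed there, the zero tail makes it smaller, so equality must be excluded via the strictness $\sigma^nb\prec b$. For $r^*$, a purely periodic admissible sequence whose period block is shift-minimal lies in $K(r^*)$. For $\widetilde r'$, it is either $\alpha(\beta)$, handled using the quasi-greedy property of $\alpha(\beta)$ and the case $r_1=\gamma$, or a finite word $r_1\dotsb r_{j-1}(r_j+1)0^\infty$ built from the periodic block, whose shifts dominate it because incrementing the last digit only raises the comparison.

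\textbf{Main obstacle.} The main obstacle is the combinatorial verification that $\widetilde r'$ remains shift-minimal, that is, that $r^*\in K(r^*)$ after the carry procedure. I would prove a small lemma: if $w$ is Lyndon-like (every proper nonzero shift of $w0^\infty$ is $\succeq$ itself) and one increments the last digit at a position $j$ where $\sigma^j$ first hits $\alpha(\beta)$, the result keeps the property. The argument uses $\sigma^j(\overline w)\succeq\alpha(\beta)\succeq$ every admissible shift.
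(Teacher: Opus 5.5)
Your outline is the paper's proof. You cut $b(t,\beta)$ at the first entry time $n$ of the orbit into $(0,t)$, compare length-$n$ blocks to get $b(t,\beta)\prec\overline{t_1\dotsb t_n}\preceq\widetilde r'$, and show that $t_1\dotsb t_n$ is Lyndon and stays so after periodization and after each carry. The step you leave open, the padding zeros, is closed by one observation you are missing: minimality of $n$ forces $t_n\neq 0$. If $n=1$, then $T_\beta(t)=\beta t-t_1<t$ needs $t_1\geq 1$. If $n\geq 2$ and $t_n=0$, then $T_\beta^{n-1}(t)=T_\beta^{n}(t)/\beta\in(0,t)$, contradicting minimality. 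So $m=n$, and no re-choice of $r$ is needed. Your block induction then works as in the paper: $\sigma^n b\prec b$ reads $W_2W_3\dotsb\prec W_1W_2\dotsb$, so the first $l$ with $W_{l+1}\neq W_l$ has $W_{l+1}\prec W_1$, whence $b\prec W_1^\infty$. Also, $T_\beta^k(t)=0$ for some $k<n$ would give $T_\beta^n(t)=0$. So minimality yields $\sigma^k b\succeq b$ for \emph{every} $0<k<n$, not only for those with $t_{k+1}\neq 0$.

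The equality case you flag needs an actual argument; the paper's proof of its claim silently assumes a differing index exists. Suppose $t_{k+1}\dotsb t_n=t_1\dotsb t_{n-k}$ for some $0<k<n$. Then $\sigma^k b=t_1\dotsb t_{n-k}\,\sigma^n b$ and $b=t_1\dotsb t_{n-k}\,\sigma^{n-k}b$. Since $\sigma^n b\prec b\preceq\sigma^{n-k}b$, this gives $\sigma^k b\prec b$, a contradiction. Hence $t_1\dotsb t_n$ is strictly Lyndon, which gives $r\in K(r)$ and $\sigma^h(\overline{t_1\dotsb t_n})\succ\overline{t_1\dotsb t_n}$ for $0<h<n$.

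Your carry lemma is purely combinatorial and does not use $\sigma^j(\overline w)\succeq\alpha(\beta)$. Let $w_1\dotsb w_p$ be strictly Lyndon and $q\leq p$. Then $w_{h+1}\dotsb w_q\succeq w_1\dotsb w_{q-h}$ for $0<h<q$, and raising the last letter makes this strict, so $w_1\dotsb w_{q-1}(w_q+1)$ is strictly Lyndon. The comparison with $\alpha(\beta)$ only guarantees $w_q<\gamma$, so that the raised letter is a legal digit. Each $\widetilde r^{(i+1)}$ arises this way from a prefix of $\widetilde r^{(i)}$, so induction covers the whole chain; the paper treats only one step ``without loss of generality''. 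Finally, when $r_1=\gamma$ one has $r^*=1$. This is a bifurcation parameter by the convention in the definition, not by a property of $\alpha(\beta)$.
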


\begin{proof}	
	Pick $ t\in [0,1)\setminus \mathcal{U} $. 	
	Let $ b(t,\beta)=(t_i) $ be the greedy $ \beta $-expansion of $ t $.
	As $ t\not \in K(t) $, there exists some integer $ k_0\geq 1 $ such that $ T_{\beta}^{k_0}(t)\in (0,t) $.
	Pick $ k $ be the smallest integer such that $  T_{\beta}^{k}(t)\in (0,t)  $.
	Let $ r =.t_1t_2\dotsb t_k $. Clearly, $ t_k\not=0 $ by the smallness of $ k $, and $r<t$.
	We are going to show that $ t<r^* $.
    Denote $W_n=t_{k(n-1)+1}\dotsb t_{kn}$ be the $n$-th block word in $(t_i) $ for $n=1,2,3,\dotsb$. Then $  T_{\beta}^{k}(t)<t  $ implies that
   $W_2W_3W_4\dotsb\prec W_1W_2W_3\cdots$. So there is $l\ge1$ such that $W_1=W_2=\dotsb=W_l$ and $W_{l+1}\prec W_l$. Thus, we get $t<r^*$ by the following
   \[(t_i)=W_1W_2W_3\dotsb\prec (W_1)^\infty=\overline{t_1t_2\dotsb t_k}\prec \widetilde{r}'.\]

	Now, we are going  to show that $ r $ is in $ \mathcal{U} $.
	We show the {\bf claim:} $ T_{\beta}^{h}(r) > r$ for any $ h\in \{1,2,\dotsc,k-1\} $.
	By contradiction, if there exists some $ h\in \{1,2,\dotsc,k-1\}  $ such that $ .t_{h+1}t_{h+2}\dotsb t_k<.t_1t_2\dotsb t_k $, then we choose $ l=\min\{ 1\leq i\leq k-h\colon t_{h+i}\neq t_i \} $.
	As $ t_{h+l}<t_l $, it leads to $ .t_{h+1}t_{h+2}\dotsb t_kt_{k+1}\dotsb<.t_1t_2\dotsb $.
	This is contract to the definition of $ k $. 	The claim is valid. Thus $ r\in K(r) $ and $ r\in\mathcal{U} $.
	
    It remains to show that   $r^*\in\mathcal{U} $.	If $r^*=1$ then $r^*\in \mathcal{U}$ by the definition. For $r^*=.\overline{t_1t_2\dotsb t_k }$ it is easy to show that
    $\sigma^h\overline{t_1t_2\dotsb t_k }\succ  \overline{t_1t_2\dotsb t_k }$ for any $ h\in \{1,2,\dotsc,k-1\}  $ by the above claim. So $r^*\in K(r^*)$ and $r^*\in \mathcal{U}.$
	For the case of $\overline{t_1t_2\dotsb t_k }\not\in\Sigma_{\beta}$, without loss of generality we assume that $t_1=0$ and
    \[r^*=.t_1t_2\dotsb t_{j-1}(t_j+1)\in \Sigma_{\beta},\ j=\inf\{i\ge1:\sigma^i \overline{t_1t_2\dotsb t_k }\succeq  \alpha(\beta)\}.\]
    By the above claim $ t_{h+1}t_{h+2}\dotsb t_k\succ  t_1t_2\dotsb t_k $ for any $ h\in\{1,2,\dotsc,j-1\} $. It must hold that
    $ t_{h+1}t_{h+2}\dotsb t_j\succeq  t_1t_2\dotsb t_{j-h} $. So $ t_{h+1}t_{h+2}\dotsb t_{j-1}(t_j+1)\succ  t_1t_2\dotsb t_{j-h} $ and
    \[ t_{h+1}t_{h+2}\dotsb t_{j-1}(t_j+1)\succ  t_1t_2\dotsb t_{j-h}\dotsb t_{j-1}(t_j+1) .\]
     So $ T_{\beta}^{h}(r^*) > r^*$ for any $ h\in \{1,2,\dotsc,j-1\} $. We obtain that $ r^*\in K(r^*) $ and $ r^*\in \mathcal{U} $.
\end{proof}

\begin{prop}\label{stable-set}
	The connected components of  $ [0,1)\setminus\mathcal{U} $ are parameterized by $ \beta $-rational numbers, which means that the following result is true,
	\[
	[0,1)\setminus \mathcal{U} =\bigcup_{r\in \mathbb{Q}_{(\beta)}}I_{r}^*.
	\]
\end{prop}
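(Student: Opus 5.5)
The plan is to prove the two inclusions separately. The inclusion $[0,1)\setminus\mathcal{U}\subseteq\bigcup_{r\in\mathbb{Q}_{(\beta)}}I_r^*$ is exactly the first assertion of Lemma~\ref{K1}: every stable $t$ lies in $I_r^*$, where $r=.t_1\dotsb t_k$ and $k$ is the first time $T_\beta^k(t)\in(0,t)$. So the work is in the reverse inclusion. Given any $r\in\mathbb{Q}_{(\beta)}$ with $b(r,\beta)=r_1\dotsb r_m0^\infty$, $r_m\neq 0$, and any $t\in(r,r^*)$, we must show $t\notin\mathcal{U}$. By Lemma~\ref{K} this is the same as $t\notin K(t)$, so it suffices to find some $k\ge 1$ with $T_\beta^k(t)\in(0,t)$.

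The natural candidate is $k=m$. Because $r<t<r^*$ and $x\mapsto b(x,\beta)$ is strictly increasing, $b(t,\beta)$ lies lexicographically strictly between $r_1\dotsb r_m0^\infty$ and $b(r^*,\beta)$.

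\emph{Case $\overline{r_1\dotsb r_m}\in\Sigma_\beta$.} Here $b(r^*,\beta)=(r_1\dotsb r_m)^\infty$, so $b(t,\beta)$ begins with $r_1\dotsb r_m$. Write $b(t,\beta)=(r_1\dotsb r_m)^l W\dotsb$ with $l\ge 1$ maximal. Since $b(t,\beta)\prec(r_1\dotsb r_m)^\infty$, the first block $W$ differing from $r_1\dotsb r_m$ satisfies $W\prec r_1\dotsb r_m$. Then $\sigma^m b(t,\beta)\prec b(t,\beta)$, which gives $T_\beta^m(t)<t$. Also $T_\beta^m(t)>0$, because $t>r$ forces the tail after the first $m$ symbols to be nonzero.

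\emph{Case $\overline{r_1\dotsb r_m}\notin\Sigma_\beta$.} Here $r^*=.\widetilde r'$, and $b(t,\beta)$ might not begin with $r_1\dotsb r_m$. If $b(t,\beta)$ shares the prefix $r_1\dotsb r_m$, the argument above works verbatim, using $b(t,\beta)\prec(r_1\dotsb r_m)^\infty$, which holds since it is admissible and below the periodic sequence. Otherwise $b(t,\beta)$ agrees with $r$ up to some index $i<m$ and then exceeds it. The construction of $\widetilde r'$ shows that $t$ then has prefix $r_1\dotsb r_{j}$ with $j$ as in the definition of $\widetilde r$, and that the following symbols are bounded by $\alpha(\beta)$. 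In this situation I would use $k=j$, or the relevant intermediate index $j'$, instead of $m$. The point is that $\sigma^j$ of the periodic word reaches $\alpha(\beta)$, while $\sigma^j b(t,\beta)\prec\alpha(\beta)$ by Lemma~\ref{333}. I would then compare $\sigma^j b(t,\beta)$ with $b(t,\beta)$ using the claim from Lemma~\ref{K1} that the words $r_{h+1}\dotsb$ dominate the prefixes.

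I expect this second case to be the main obstacle, since it needs the admissibility cutoff together with the minimality property of $r$. If it becomes too messy, a fallback is a purely topological argument. Show that the endpoints $r,r^*\in\mathcal{U}$ (Lemma~\ref{K1}) and that $K(t)=K(r^*)$ for all $t\in(r,r^*]$: any $x\in K(t)\setminus K(r^*)$ would have an orbit point in $[t,r^*)$, and one would argue such points have a further orbit point in $(0,t)$. Since $K$ is constant on the open interval $(r,r^*)$, no point of that interval can be a bifurcation parameter, which gives the reverse inclusion.
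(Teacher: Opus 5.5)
\textbf{There is a genuine gap in the case $\overline{r_1\dotsb r_m}\notin\Sigma_\beta$.} Your inclusion $[0,1)\setminus\mathcal{U}\subseteq\bigcup_r I_r^*$ via Lemma~\ref{K1} is correct and agrees with the paper, and so is your case $\overline{r_1\dotsb r_m}\in\Sigma_\beta$.

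For $r_1<\gamma$ you miss the key observation: $b(t,\beta)$ can never lie above the periodic word. Suppose $\overline{r_1\dotsb r_m}\prec b(t,\beta)\prec r_1\dotsb r_{j-1}(r_j+1)0^\infty$. The first index $h$ where $b(t,\beta)$ exceeds $\overline{r_1\dotsb r_m}$ must satisfy $h>j$, since otherwise $b(t,\beta)\succeq r_1\dotsb r_{j-1}(r_j+1)0^\infty$. Then $\sigma^j b(t,\beta)\succ\sigma^j\overline{r_1\dotsb r_m}\succeq\alpha(\beta)$, which contradicts Lemma~\ref{333}. If several steps $\widetilde r^{(i)}$ were needed, one iterates the same argument through them. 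Hence always $r_1\dotsb r_m0^\infty\prec b(t,\beta)\prec\overline{r_1\dotsb r_m}$, and $k=m$ works exactly as in the first case.

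Your proposal does not supply this. In the ``shares the prefix'' subcase you justify $b(t,\beta)\prec(r_1\dotsb r_m)^\infty$ by ``since it is admissible and below the periodic sequence'', which assumes precisely this fact. Your plan to use $k=j$ in the other subcase targets a subcase that is actually empty, and it contains no argument. Moreover, the ``claim from Lemma~\ref{K1}'' you want to invoke was proved only for $r=.t_1\dotsb t_k$ with $k$ a minimal return time, i.e.\ for Lyndon $r$, not for an arbitrary $r\in\mathbb{Q}_{(\beta)}$.

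\textbf{The case $r_1=\gamma$ is not covered.} Here $r^*=1$, and $b(t,\beta)$ \emph{can} exceed the periodic word, so neither $k=m$ nor $k=j$ need work. For example, take $\beta$ the tribonacci number, so $\alpha(\beta)=(110)^\infty$, and $b(r,\beta)=10110^\infty$. Then $j=2$ and $r^*=1$. The point $t=.11\in(r,1)$ satisfies $T_\beta^2(t)=T_\beta^4(t)=0$, and only $T_\beta(t)=1/\beta$ lies in $(0,t)$. The paper disposes of this case with the separate observation that $K(t)=\{0\}$ whenever $t>\gamma/\beta$.

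\textbf{Your topological fallback does not close either gap.} Its key step, that points of $K(t)\setminus K(r^*)$ eventually enter $(0,t)$, is just the proposition restated. Its appeal to Lemma~\ref{K1} for $r\in\mathcal{U}$ also fails for non-Lyndon $r$ (Remark~\ref{stable-set1}).
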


\begin{proof}
	Fix $ r_1r_2\dotsb r_m00\dotsb\in \mathbb{Q}_{(\beta)} $ with $ r_m\neq 0 $ be the finite greedy $ \beta $-expansion of $ r $.
	For any $ t\in I_r^* $,  according to Lemma~\ref{K} and Lemma~\ref{K1} we just need to show that $ t\not \in K(t) $.

	Let $ b(t,\beta)=(t_i) $ be the greedy $ \beta $-expansion of $ t $.
    If $r_1=1$ then $K(t)=\{0\}$ for all $t\in(r,1)$. For the case of $r^*=.\overline{r_1r_2\dotsb r_m }$,
	since $ t\in I_r^* $,  $ (t_i) $ must have the form $(t_i)=W_1W_2\dotsb W_kW_{k+1}\dotsb $ where $W_1=W_2=\dotsb=W_k=r_1r_2\dotsb r_m$ and $W_{k+1}\prec r_1r_2\dotsb r_m$ for some $k\geq1$.
	So $ 0<T_{\beta}^{m}(t)<t $ and   $ t \not\in K(t) $.
    For the case of $r_1=0$ and $\overline{r_1r_2\dotsb r_m }\not\in\Sigma_{\beta}$, without loss of generality we assume that
    \[r^*=.r_1r_2\dotsb r_{j-1}(r_j+1)\in \Sigma_{\beta},\ j=\inf\{i\ge1:\sigma^i \overline{r_1r_2\dotsb r_m }\succeq  \alpha(\beta)\}.\]
    Clearly, $(t_i)\not=\overline{r_1r_2\dotsb r_m }.$
   If $r_1r_2\dotsb r_m0^\infty\prec (t_i)\prec \overline{r_1r_2\dotsb r_m }$  we have get $ 0<T_{\beta}^{m}(t)<t $  in the second case.
   If $\overline{r_1r_2\dotsb r_m }\prec (t_i)\prec r_1r_2\dotsb r_{j-1}(r_j+1)0^\infty$   there is $h\ge j+1 $ such that $t_i=r_{(i\ \mathrm{mod}\, m)},1\le i \le h-1$ and $t_h>r_{(h\ \mathrm{mod}\, m)}$.
   Therefor, \[\sigma^j(t_i)=t_{j+1}\dotsb t_h\dotsb \succ   \sigma^j(\overline{r_1r_2\dotsb r_m })\succeq  \alpha(\beta).\]
   It is a contradiction with $(t_i)\in \Sigma_{\beta}$. It always holds that  $r_1r_2\dotsb r_m0^\infty\prec (t_i)\prec \overline{r_1r_2\dotsb r_m }$.  So we get $ 0<T_{\beta}^{m}(t)<t $ and $t\not\in K(t). $
\end{proof}

\begin{remark}\label {stable-set1}
	 We say a word $ \omega $ is Lyndon, if for each decomposition $ \omega=AB $ into two non-empty words, it always has $ \omega\prec  B $.
	 And a real number $ r\in \mathbb{Q}_{(\beta)} $ is called Lyndon, if its greedy $ \beta $-expansion $ r_1r_2\dotsb r_k $ is a Lyndon word with $ r_k $ is last non-zero number.
	 In fact, for every $ r\in \mathbb{Q}_{(\beta)} $, the   number  $ r $ is in $ \mathcal{U} $ if and only if $ r$ is a Lyndon number.
	 Therefore,  if we denote the set of all the Lyndon number by $ \mathbb{Q}_{(\beta,Lyn)} $, then  it implies that
	 \[
	 [0,1)\setminus \mathcal{U} =\bigcup_{r\in \mathbb{Q}_{(\beta)}}I_{r} ^*=\bigsqcup_{r\in \mathbb{Q}_{(\beta,Lyn)}}I_{r}^*,
	 \]
where ``$\bigsqcup$" means a disjoint union.
\end{remark}

Let $ S=s_1s_2\dotsb s_m $ be a word with length $ m $ and $ x\in [0,1) $.
We use the symbol $ S\cdot x $ to express the real number  $ S\cdot x=\frac{s_1}{\beta}+\frac{s_2}{\beta^2}+\dotsb+\frac{s_m}{\beta^m}+\frac{x}{\beta^m} $.
Set
\[
{\Sigma}_{\beta,n}=\{(a_i)\in \{0,1,\dotsc, \gamma \}^n\colon  a_1a_2\dotsb a_n 0^\infty \in {{\Sigma}}_{\beta}\}.
\]
Let $ t\in [0,1) $ and $ b(t,\beta)= (t_i ) $ be its greedy $ \beta $-expansion.
For any $ k\geq 1 $  and any $ s\in \{0,1,\dotsc,\gamma\} $, we define a word $ S_{k,s}(t)=t_1t_2\dotsb t_{k-1}s $.
Set $ \Omega_{\beta,k}(t)= \{S_{k,s}(t)\colon s>t_k \} $  and  $ \Omega_{\beta}(t)=\bigcup_{k\geq 1} \Omega_{\beta,k}(t) $, we are going to characterize the structure of  $ K(t) $ through $ \Omega_{\beta}(t) $.

\begin{prop}\label{K(t)}
	 If $ t\in \mathcal{U} $, then
	 we have the following result
	 \[
	 K(t)\subset \{t\}\cup \bigcup_{k=1}^{\infty} \Bigg(  \bigsqcup_{S\in \Omega_{\beta,k}(t)\cap{\Sigma}_{\beta,k}} S\cdot K(t)\Bigg).
 	 \]
\end{prop}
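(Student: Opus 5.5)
Fix $x\in K(t)$ with $x\neq t$, and let $b(x,\beta)=(x_i)$ and $b(t,\beta)=(t_i)$ be the greedy expansions. The goal is to exhibit some $k\geq 1$ and some $S\in\Omega_{\beta,k}(t)\cap\Sigma_{\beta,k}$ with $x\in S\cdot K(t)$. The natural choice is to let $k$ be the first index where the two expansions differ, so that $S=x_1\dotsb x_k$. We then check three things: that this $S$ has the required form, that it is admissible, and that the remainder $T_\beta^k(x)$ lies in $K(t)$.

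\textbf{Step 1: $S\in\Omega_{\beta,k}(t)$.} If $x=0$, I treat it separately (see the last paragraph). Otherwise $x\in K(t)$ gives $x=T_\beta^0(x)\notin(0,t)$, hence $x\geq t$. Since $x\neq t$, the map $x\mapsto b(x,\beta)$ is a strictly increasing bijection, so $(x_i)\succ(t_i)$. Let $k=\min\{i\colon x_i\neq t_i\}$. Then $x_k>t_k$, so $S:=x_1\dotsb x_k=t_1\dotsb t_{k-1}x_k=S_{k,x_k}(t)\in\Omega_{\beta,k}(t)$.

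\textbf{Step 2: $S\in\Sigma_{\beta,k}$.} Because $(x_i)\in\Sigma_\beta$, Lemma~\ref{333} gives $\sigma^j((x_i))\prec\alpha(\beta)$ for every $j$. Truncating with zeros only lowers a sequence lexicographically, so $\sigma^j(x_1\dotsb x_k0^\infty)\preceq\sigma^j((x_i))\prec\alpha(\beta)$ for $j<k$. For $j\geq k$ the shifted sequence is $0^\infty\prec\alpha(\beta)$. Lemma~\ref{333} then gives $x_1\dotsb x_k0^\infty\in\Sigma_\beta$, which is exactly $S\in\Sigma_{\beta,k}$.

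\textbf{Step 3: $x\in S\cdot K(t)$.} Set $y=T_\beta^k(x)$. Greedy expansions satisfy $b(y,\beta)=\sigma^k((x_i))$, so $x=.x_1\dotsb x_k+y/\beta^k=S\cdot y$. Moreover $T_\beta^m(y)=T_\beta^{m+k}(x)\notin(0,t)$ for every $m\geq0$, so $y\in K(t)$.

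The main obstacle is the degenerate point $x=0$, where $x\geq t$ fails. It satisfies $0=0\cdot 0$ with $0\in K(t)$, but the one-letter word $0$ lies in $\Omega_{\beta,1}(t)$ only if $t_1<0$, which never happens. One could try the stated union with $k$ large, but $0^k$ never exceeds $t_1\dotsb t_k$. So the case $x=0$ cannot be covered by the union as written, and I will have to state it separately rather than derive it. I would read the statement as holding for $K(t)\setminus\{0\}$, or equivalently add $\{0\}$ to the right-hand side; this is harmless for the later dimension computation. Finally, disjointness of the union should be noted: distinct words $S$ of the form above (either different $k$, or the same $k$ with different last letter) are incomparable as prefixes, since each differs from $t$ first at its own last position, so the cylinders $S\cdot[0,1)$ are pairwise disjoint.
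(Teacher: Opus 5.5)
Your inclusion argument is the paper's. You take the first index $k$ where $b(x,\beta)$ and $b(t,\beta)$ differ, set $S=x_1\dotsb x_k=S_{k,x_k}(t)$, and use $T_\beta$-invariance of $K(t)$ to write $x=S\cdot T_\beta^k(x)$ with $T_\beta^k(x)\in K(t)$. You are more careful than the paper in two places. First, you actually verify $S\in\Sigma_{\beta,k}$, where the paper only says ``it is clear''. Second, you notice that the paper's step ``since $x$ is larger than $t$'' fails for $x=0$. That catch is correct. We have $0\in K(t)$ for every $t$, while every point of $S\cdot K(t)$ is at least $s_k\beta^{-k}>0$. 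So for $t\in\mathcal{U}$ with $t>0$ the inclusion as stated fails at the single point $0$, and $\{0\}$ must be added to the right-hand side; for $t=0$ the point is covered by $\{t\}$. Your argument also never uses $t\in\mathcal{U}$. The paper invokes it only to record $t\in K(t)$, which this inclusion does not need.

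Your closing remark on disjointness is wrong for different $k$. The set $S\cdot[0,1)$ is not the greedy cylinder of $S$, because the concatenation $S\,b(y,\beta)$ need not lie in $\Sigma_\beta$. So the fact that the words are incomparable as prefixes does not give disjointness. For a counterexample, let $\beta$ be the golden mean, so $\alpha(\beta)=(10)^\infty$, and let $t=\beta^{-3}$. Then $b(t,\beta)=0010^\infty$ and the orbit of $t$ avoids $(0,t)$, so $t\in\mathcal{U}$. We have $1\in\Omega_{\beta,1}(t)\cap\Sigma_{\beta,1}$ and $01\in\Omega_{\beta,2}(t)\cap\Sigma_{\beta,2}$, and both $0$ and $\beta^{-1}$ lie in $K(t)$. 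Yet $1\cdot 0=\beta^{-1}=\beta^{-2}+\beta^{-3}=01\cdot\beta^{-1}$.

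This does not affect the proposition. The statement claims disjointness over $S$ only for fixed $k$; the union over $k$ is an ordinary union. For fixed $k$ disjointness is immediate: if $s<s'$, then $S_{k,s}(t)\cdot[0,1)\subset[.t_1\dotsb t_{k-1}s,\ .t_1\dotsb t_{k-1}(s+1))$, which lies to the left of $.t_1\dotsb t_{k-1}s'$. For $1<\beta<2$ there is at most one such word per $k$ anyway. So you should drop the claim for different $k$.
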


 \begin{proof}
 	By Lemma \ref{K}, the number $ t $ must be in $ K(t) $ because of  $ t\in \mathcal{U} $. Let $ b(t,\beta)= (t_i) $ be the greedy $ \beta $-expansion of $ t $ and $ b(x,\beta)= (x_i) $ be the greedy $ \beta $-expansion of $ x $.
 	 Set $ l=\min\{n\geq 1\colon t_n\neq x_n \} $.
 	 Since $ x $ is larger than $ t $, we must have $ x_l>t_l $ and $ x_i=t_i $ for any $ 1\leq i\leq l-1 $.
 	 Now, set $ S=t_1t_2\dotsb t_{l-1}x_l $.
 	 It is clear that  the word $S $ is in $ \Omega_{\beta,l}(t)\cap {\Sigma}_{\beta,l} $. 	
 	 Observe that the set $ K(t) $ is $ T_{\beta} $-invariant, i.e. $ T_{\beta}(K(t))\subset K(t) $ and $ \pi_{\beta}\circ\sigma=T_{\beta}\circ\pi_{\beta}   $, the number $ .x_{l+1}x_{l+2}\dotsb  $ is still in $ K(t) $ and hence, the number $x=S\cdot (.x_{l+1}x_{l+2}\dotsb)$ is in $S\cdot K(t)$.
\end{proof}

\section{The Haudorff dimension of the survivor set}
    Raith considered the topological pressure, the topological entropy and the Hausdorff dimension of the survivor set for piecewise monotone expanding maps on $ [0,1] $.
    He proved that the Hausdorff dimension of the survivor set is the unique zero of the pressure function, see\cite[Lemma 3]{Rai94} and \cite[Lemma 4]{Rai92}.
    In our setting, this means the following result is true.
\begin{lem}\label{dimension-entropy}
	 Let $ \beta>1 $, then
	 \[
	 \dimh(K(t))\cdot\ln\beta=h_{top}(K(t),T_{\beta}).
	 \]
\end{lem}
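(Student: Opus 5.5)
The plan is to derive the identity from Raith's theory of piecewise monotone expanding maps, namely \cite[Lemma 3]{Rai94} and \cite[Lemma 4]{Rai92}. That theory says the Hausdorff dimension of a closed invariant survivor set equals the unique zero $s_0$ of the pressure function $s\mapsto P(-s\ln|f'|)$ restricted to that set. For $T_{\beta}$ the derivative is constant, $|T_\beta'|\equiv\beta$ on every branch. So the potential $-s\ln|T_\beta'|$ is the constant $-s\ln\beta$, and the pressure is
\[
P\bigl(K(t),-s\ln\beta\bigr)=h_{top}(K(t),T_{\beta})-s\ln\beta .
\]
This is an affine, strictly decreasing function of $s$. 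Its unique zero is $s_0=h_{top}(K(t),T_\beta)/\ln\beta$, and multiplying through by $\ln\beta$ gives the claimed equality.

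First I will check that $(K(t),T_\beta)$ fits Raith's framework. The map $T_\beta$ is piecewise monotone on $[0,1]$, with branches $[j/\beta,(j+1)/\beta)$ for $j<\gamma$ plus the last partial branch, and it is uniformly expanding with constant $\beta>1$. The survivor set $K(t)$ is exactly the set of points whose orbits avoid the open hole $(0,t)$. Its closure in $[0,1]$ differs from $K(t)$ by at most countably many points, namely preimages of discontinuities and of the endpoint $1$. Removing countably many points changes neither the Hausdorff dimension nor the topological entropy, the latter because entropy is governed by the growth rate of admissible cylinders. I also need $T_\beta(K(t))\subset K(t)$, which is immediate from the definition.

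As an alternative, self-contained route, I would prove the two inequalities directly through cylinder counts. Let $N_n$ be the number of $n$-cylinders of $T_\beta$ that meet $K(t)$. By definition $h_{top}(K(t),T_\beta)=\lim_{n}\frac1n\ln N_n$. Each $n$-cylinder has length at most $\beta^{-n}$, so covering $K(t)$ by these cylinders gives $\dimh K(t)\le \liminf_n \ln N_n/(n\ln\beta)=h_{top}/\ln\beta$. For the lower bound, I would construct a Frostman measure: an ergodic measure of (nearly) maximal entropy on $K(t)$. By Shannon--McMillan--Breiman combined with the constant derivative, it has local dimension $h_\mu/\ln\beta$ almost everywhere. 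Letting $h_\mu\to h_{top}$ then gives the matching bound.

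The main obstacle is the lower bound. Full $n$-cylinders of $T_\beta$ can be much shorter than $\beta^{-n}$ when the cylinder is not full, since the last symbol is constrained by $\alpha(\beta)$. I would handle this as Raith does, by passing to a Markov or Hofbauer-tower approximation. Concretely, I would approximate $K(t)$ from inside by subshifts of finite type built from words whose cylinders are full. Their entropies converge to $h_{top}(K(t))$, and on each of them the cylinder lengths are comparable to $\beta^{-n}$ up to a bounded constant. This is exactly the content I would import from \cite{Rai94,Rai92}.
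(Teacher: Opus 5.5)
Your proposal takes essentially the same approach as the paper. The paper simply cites Raith (\cite[Lemma 3]{Rai94}, \cite[Lemma 4]{Rai92}) for the fact that the dimension is the zero of the pressure, and with $|T_\beta'|\equiv\beta$ that pressure is $h_{top}(K(t),T_\beta)-s\ln\beta$. Your extra checks (that the closure adds only countably many points) and your sketched direct cylinder-counting route go beyond what the paper writes but are not needed for the argument as given.
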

	In order to obtain the Hausdorff dimension of $ K(t) $, we study its topological entropy.
We define a metric $ d $ on $ \{0,1,\dotsc,\gamma\}^{\infty} $ as
\[
d((x_i),(y_i) )= \sum_{i=1}^{\infty}\frac{|x_i-y_i|}{\beta^i}.
\]
Recalled that we have defined a map $\pi\colon \{0,1,\dotsc,\gamma\}^{\infty}\to [0,1]$ such that $\pi((x_i))=\sum_{i=1}^{\infty}\frac{x_i}{\beta^i} $.
Let $\widetilde{\pi}_{\beta}=\pi|_{\widetilde{\Sigma}_{\beta}}\colon \widetilde{\Sigma}_{\beta}\to (0,1] $.
Note that the map $ \widetilde{\pi}_{\beta} $ satisfies $ \widetilde{\pi}_{\beta}\circ \sigma=T_{\beta}\circ \widetilde{\pi}_{\beta} $ and is a homeomorphism, it follows that $ h_{top}(Y,T_{\beta})=h_{top}(\widetilde{\pi}_{\beta}^{-1}(Y),\sigma) $ holds for any subset $Y\subset (0,1] $.
In particular, we have $ h_{top}(K(t),T_{\beta})=h_{top}(\widetilde{\pi}_{\beta}^{-1}(K(t)),\sigma)$.

In the rest of this article, we consider the special case of $1<\beta\leq 2$.
 	Let $(\{0,1\}^{\mathbb{N}},\sigma)$ be the full shift over two symbols.
 	For $\mathbf{t}=(t_i)$ and  $\mathbf{u}=(u_i)\in \{0,1\}^{\mathbb{N}}$, we define a subset  $\mathcal{K}(\mathbf{t},\mathbf{u})$ of $\{0,1\}^{\mathbb{N}}$ as follow,
 	\[
 	\mathcal{K}(\mathbf{t},\mathbf{u})=\{\mathbf{x}=(x_i)\in \{0,1\}^{\mathbb{N}}\colon (t_i)\preceq   \sigma^n( (x_i)) \preceq  (u_i)  \text{ for all } n\geq 0\}.
 	\]
 	Often, for convenience, we will write it  as
 	\[
 	\mathcal{K}(\mathbf{t},\mathbf{u})=\{\mathbf{x}\in \{0,1\}^{\mathbb{N}}\colon \sigma^n (\mathbf{x}) \in [\mathbf{t},\mathbf{u}]  \text{ for all } n\geq 0\}.
 	\]
 	 It is clear that $ \mathcal{K}(\mathbf{t},\alpha(\beta))\bigtriangleup\widetilde{\pi}_{\beta}^{-1}(K(t)) $ is a countable set, where $A\bigtriangleup B=(A\setminus B)\cup (B\setminus A)$.
     So $ h_{top}(\mathcal{K}(\mathbf{t},\alpha(\beta)),\sigma)=h_{top}(\widetilde{\pi}_{\beta}^{-1}(K(t),\sigma) $ by the variational principle of entropy.
 	 In \cite[Proposition 2.6]{KKLL2020} the authors also give a proof via a particular computation.

     Thus, we get the following equation and we just need to consider the topological entropy of $\mathcal{K}(\mathbf{t},\mathbf{u})$.
\[\dimh(K(t))=\frac{h_{top}(K(t),T_{\beta})}{\ln\beta}=\frac{h_{top}(\mathcal{K}(\mathbf{t},\alpha(\beta)),\sigma)}{\ln\beta}.\]

 \begin{remark} It is not hard to obtain the following results.
 	\begin{enumerate}
 		\item $\mathcal{K}(\mathbf{t},\mathbf{u})$ is  closed and strongly $ \sigma $-invariant, i.e., $ \sigma(\mathcal{K}(\mathbf{t},\mathbf{u}))= \mathcal{K}(\mathbf{t},\mathbf{u})  $.
 		
 		\item $\mathcal{K}(\mathbf{t},\mathbf{u})\subset\{0^\infty,1^\infty\}$ if $t_1=1$ or $u_1=0$.
 		
 		\item $\mathcal{K}(\mathbf{t} ,\mathbf{u})$  consists of finite periodic points if $\sum_{n=1}^\infty \frac{u_n-t_n}{2^n}< \frac{1}{2}$.
 		
 	\end{enumerate}
 \end{remark}

 \begin{thm}\label{entropy}
 	Assuming that $\mathbf{t}=(t_i)$ and $\mathbf{u}=(u_i)\in  \{0,1\}^{\mathbb{N}}$  are two elements of $ \mathcal{K}=\mathcal{K}(\mathbf{t},\mathbf{u})$. That is,
 	 \[
 	 t_1t_2t_3\cdots \preceq   \sigma^n (t_1t_2t_3\cdots) \preceq  u_1u_2u_3\cdots
 	 \] and
 	 \[
 	 t_1t_2t_3\cdots \preceq   \sigma^n (u_1u_2u_3\cdots) \preceq  u_1u_2u_3\cdots
 	 \]
 	 hold for any $n\geq 0$.
 	 Then the topological entropy $h_{top}(\mathcal{K},\sigma)=-\ln\lambda$,
 	where $\lambda\in (0,1)$ is the smallest positive solution of the equation
   \begin{equation}
   		\sum_{n=1}^{\infty}(u_n-t_n){z^{n}}=1. \label{eq}
   \end{equation}	
 	If the equation has no root in $(0,1)$, then $h_{top}(\mathcal{K},\sigma)=0$.
 \end{thm}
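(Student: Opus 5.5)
The plan is to compute the entropy by counting admissible words and to recognise the equation \eqref{eq} as the characteristic equation of a recursion satisfied by these counts.

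\textbf{Setting up the count.} Let $N_n$ be the number of words of length $n$ appearing in elements of $\mathcal{K}$. Since $\mathcal{K}$ is a closed, strongly $\sigma$-invariant subshift, $h_{top}(\mathcal{K},\sigma)=\lim_n \frac1n\ln N_n$. It is more convenient to work with the generating function $F(z)=\sum_n N_n z^n$, whose radius of convergence is $e^{-h_{top}}$.

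\textbf{Decomposition by first deviation.} I would imitate Proposition~\ref{K(t)}, applied from both sides, using the hypothesis that $\mathbf t,\mathbf u\in\mathcal K$. An element $\mathbf x\in\mathcal K$ is split according to the first index where it leaves $\mathbf u$ or $\mathbf t$:
\begin{itemize}
\item[(i)] $\mathbf x$ starts with $u_1\dotsb u_{k-1}0$ where $u_k=1$ and $t_k=0$ would be too crude, so instead track the interval of cylinders;
\item[(ii)] the cleaner route is Milnor--Thurston kneading: the words of length $n$ not allowed are exactly those with a first violation, which occurs at position $k$ by leaving through $\mathbf u$ (the prefix $u_1\dotsb u_{k-1}$ followed by $1$ with $u_k=0$) or through $\mathbf t$.
\end{itemize}
Writing $\mathbf x=\mathbf u|_{k-1}\,\bar u_k\,\mathbf y$ or $\mathbf x=\mathbf t|_{k-1}\,\bar t_k\,\mathbf y$ with $\mathbf y\in\mathcal K$, the self-admissibility of $\mathbf t$ and $\mathbf u$ guarantees that, once $\mathbf x$ has diverged from the boundary sequence, the only remaining constraint is $\mathbf y\in\mathcal K$. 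This is the analogue of the disjoint union in Proposition~\ref{K(t)}, and it gives a renewal identity of the form
\[
F(z)\Bigl(1-\sum_{n\ge1}(u_n-t_n)z^n\Bigr)=G(z),
\]
where $G$ collects the boundary contributions and has radius of convergence at least $1$. The difference $u_n-t_n$ arises because, at step $n$, a branch opens for $\mathbf u$ exactly when $u_n=1$ (diverging downward to $0$) and one closes for $\mathbf t$ when $t_n=1$, so the net number of new branches is $u_n-t_n$. I would check this bookkeeping carefully on small examples such as $\mathbf t=0^\infty,\mathbf u=1^\infty$, where one should get $z/(1-z)=1$ and hence $\lambda=1/2$.

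\textbf{Conclusion.} Given the identity, $F$ is meromorphic in $|z|<1$ with poles only at zeros of $1-\Phi(z)$, where $\Phi(z)=\sum(u_n-t_n)z^n$. The radius of convergence of $F$ is then the smallest positive real zero $\lambda$ of $\Phi=1$. By Pringsheim's theorem the singularity of $F$ lies on the positive axis since $N_n\ge0$; one must also check that $G(\lambda)\ne0$ so the pole is not cancelled, for instance by a positivity argument. If there is no zero in $(0,1)$, $F$ converges on $|z|<1$, so $h_{top}=0$.

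\textbf{Main obstacle.} The hard step is the exact bookkeeping in the decomposition. The coefficients $u_n-t_n$ can be negative, so the counting must be organised as an inclusion--exclusion between the words below $\mathbf u$ and those below $\mathbf t$, rather than as a pure union. Non-cancellation at $\lambda$ is a secondary difficulty. My first attempt would be to count $A_n(\mathbf u)$, the admissible words lexicographically $\preceq$ the prefix of $\mathbf u$, and take $N_n$ as the difference of such counts, deriving a linear recursion for each.
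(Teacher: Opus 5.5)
Your route differs from the paper's, but the proposal has a genuine gap. The renewal identity $F(z)(1-\Phi(z))=G(z)$ carries the whole content of the theorem, and you assert it without proof. The paper does no counting: with $\mathbf{a}=0\mathbf{u}$, $\mathbf{b}=1\mathbf{t}$ it checks $\mathbf{a},\mathbf{b}\in E(\mathbf{a},\mathbf{b})$, passes to $E(\mathbf{a},\mathbf{b})$ via Proposition~\ref{subshift}, and quotes Proposition~\ref{lemma3}.

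The justification you sketch is false. Once $\mathbf{x}$ leaves $\mathbf{u}$ at position $k$, the tail is not constrained only by $\mathbf{y}\in\mathcal{K}$. A suffix of $u_1\dotsb u_{k-1}0$ can be a prefix of $\mathbf{t}$, and then the tail is also bounded from below. Take $\mathbf{t}=(01)^\infty$, $\mathbf{u}=1^\infty$, so $\mathcal{K}$ is the golden mean shift (no $00$). Then $\mathbf{x}=u_1\,0\,\mathbf{t}=10(01)^\infty$ has tail $\mathbf{t}\in\mathcal{K}$, yet $\mathbf{x}\notin\mathcal{K}$. Correspondingly $1+\sum_{k\le 2}u_kN_{2-k}=4\neq 3=N_2$, so the upper decomposition is not a disjoint union with free tails. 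The identity $N_n=1+\sum_k(u_k-t_k)N_{n-k}$ happens to be true, but the heuristic that branches open when $u_n=1$ and close when $t_n=1$ does not prove it. Your conclusion also rests on two unproved claims: that $G$ has radius at least $1$, and that $G(\lambda)\neq 0$. Without them, Pringsheim's theorem does not identify the radius of convergence of $F$ with $\lambda$.

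Your closing idea is the right repair. Assume $t_1=0$, $u_1=1$ (otherwise $\mathcal{K}$ is a single fixed point). Let $\mathcal{L}_m$ be the words of length $m$ of $\mathcal{K}$, and for $\mathbf{x}\in\mathcal{K}$ set $\psi_m(\mathbf{x})=\#\{w\in\mathcal{L}_m\colon w\preceq x_1\dotsb x_m\}$. Then $\psi_0\equiv 1$, $\psi_m(\mathbf{t})=1$ and $\psi_m(\mathbf{u})=N_m$. Because $\mathbf{t},\mathbf{u}\in\mathcal{K}$, two facts hold:
\begin{itemize}
\item $0v\in\mathcal{L}_m$ iff $v\in\mathcal{L}_{m-1}$ and $v\succeq t_2\dotsb t_m$;
\item $1v\in\mathcal{L}_m$ iff $v\in\mathcal{L}_{m-1}$ and $v\preceq u_2\dotsb u_m$.
\end{itemize}
Splitting by the first symbol gives $\psi_m(\mathbf{x})=\psi_{m-1}(\sigma\mathbf{x})+x_1N_{m-1}+1-\psi_{m-1}(\sigma\mathbf{t})$. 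Telescoping along the orbit of $\mathbf{x}$ yields $\psi_m(\mathbf{x})=1+\sum_{k=1}^m x_kN_{m-k}+C_m$, with $C_m$ independent of $\mathbf{x}$. Taking $\mathbf{x}=\mathbf{t}$ gives $C_m=-\sum_{k=1}^m t_kN_{m-k}$. Taking $\mathbf{x}=\mathbf{u}$ then gives $N_m=1+\sum_{k=1}^m(u_k-t_k)N_{m-k}$, that is, $F(z)(1-\Phi(z))=1/(1-z)$ for $F(z)=\sum_{m\ge 0}N_mz^m$.

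Since $G=1/(1-z)$ never vanishes, there is no cancellation and your Pringsheim argument goes through. Completed this way, your proof is self-contained and genuinely different from the paper's. It avoids both the $\omega$-limit-set argument of Proposition~\ref{subshift} and the external lemma of Barnsley--Steiner--Vince. It also settles the zero-entropy case directly, whereas Proposition~\ref{lemma3} is quoted only under the hypothesis of positive entropy.
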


 It is clearly that if the equation \eqref{eq} has a root, then the root must be in  $[\frac{1}{2},1)$.
 Furthermore,  if  the root of the equation \eqref{eq} exists, then it  must be in $[\frac{1}{\beta},1)$ where $\beta\in (1,2]$ satisfies that
\[ 1=\sum_{n=1}^{\infty}\frac{u_n}{\beta^n}.\]
To prove this theorem we first consider several invariant subsets of $(\{0,1\}^{\mathbb{N}},\sigma)$.
 For $\mathbf{a}=(a_i)$, $ \mathbf{b}=(b_i)\in \{0,1\}^{\mathbb{N}}$ with $a_1=0$  and  $b_1=1$,   define
 \begin{align*}
 E(\mathbf{a},\mathbf{b})&=\{\mathbf{x}=(x_i)\in \{0,1\}^{\mathbb{N}}\colon \sigma^n (\mathbf{x})\preceq  \mathbf{a}   \text{ or }  \mathbf{b}\preceq  \sigma^n (\mathbf{x})  \text{ for all } n\geq 0\}\\
&=\{(x_i)\in \{0,1\}^{\mathbb{N}}\colon \sigma^n (x_1x_2x_3\cdots)\not\in(a_1a_2\cdots,b_1b_2\cdots)  \text{ for all } n\geq 0\}
\end{align*}
 and
 \begin{align*}
  F(\mathbf{a},\mathbf{b})=\{\mathbf{x}\in \{0,1\}^{\mathbb{N}}\colon &\sigma(\mathbf{b})\preceq \sigma^n (\mathbf{x})\preceq  \mathbf{a}   \text{ or }\\
 &  \mathbf{b}\preceq  \sigma^n (\mathbf{x})\preceq \sigma (\mathbf{a})  \text{ for all } n\geq 0\}.
 \end{align*}

 \begin{prop} \label{subshift}
 	Let $\mathbf{a}=(a_i)$  and  $ \mathbf{b}=(b_i) $ be two sequences in  $  \{0,1\}^{\mathbb{N}}$ with $a_1=0$  and $b_1=1$.
 	 Then
 	\begin{enumerate}
 		\item $F(\mathbf{a},\mathbf{b})= \mathcal{K}(\sigma (\mathbf{b}),\sigma(\mathbf{a})).$

 		\item $E(\mathbf{a},\mathbf{b})\cap [\sigma(\mathbf{b}),\sigma(\mathbf{a})]=F(\mathbf{a},\mathbf{b})= \mathcal{K}(\sigma (\mathbf{b}),\sigma(\mathbf{a})).$
 		
 		\item $h_{top}(\mathcal{K}(\sigma(\mathbf{b}),\sigma(\mathbf{a})),\sigma)=h_{top}(E(\mathbf{a},\mathbf{b}),\sigma).$
 	\end{enumerate}
 \end{prop}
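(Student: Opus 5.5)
The plan is to prove all three items by one elementary observation: in $\{0,1\}^{\mathbb{N}}$, a lexicographic comparison with a sequence whose first symbol is known reduces to a comparison of the shifts. Since $a_1=0$ and $b_1=1$, for any $\mathbf{y}=(y_i)$ we have
\[
\mathbf{y}\preceq \mathbf{a}\iff \bigl(y_1=0 \text{ and } \sigma(\mathbf{y})\preceq \sigma(\mathbf{a})\bigr),\qquad
\mathbf{b}\preceq \mathbf{y}\iff \bigl(y_1=1 \text{ and } \sigma(\mathbf{b})\preceq \sigma(\mathbf{y})\bigr).
\]
Hence a condition on $\sigma^n(\mathbf{x})$ against $\mathbf{a}$ or $\mathbf{b}$ becomes a condition on $\sigma^{n+1}(\mathbf{x})$ against $\sigma(\mathbf{a})$ or $\sigma(\mathbf{b})$, selected by the digit $x_{n+1}$. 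I will use this repeatedly.

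For (1), the inclusion $\mathcal{K}(\sigma(\mathbf{b}),\sigma(\mathbf{a}))\subset F(\mathbf{a},\mathbf{b})$ is clean. Take $\mathbf{y}=\sigma^n(\mathbf{x})$, which lies in $[\sigma(\mathbf{b}),\sigma(\mathbf{a})]$ together with $\sigma(\mathbf{y})$. If $y_1=0$, then $\sigma(\mathbf{y})\preceq\sigma(\mathbf{a})$ gives $\mathbf{y}\preceq\mathbf{a}$, so $\mathbf{y}\in[\sigma(\mathbf{b}),\mathbf{a}]$. If $y_1=1$, then $\sigma(\mathbf{y})\succeq\sigma(\mathbf{b})$ gives $\mathbf{y}\succeq\mathbf{b}$, so $\mathbf{y}\in[\mathbf{b},\sigma(\mathbf{a})]$.

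For the reverse inclusion, take $\mathbf{x}\in F(\mathbf{a},\mathbf{b})$ and $n\ge 0$, and set $\mathbf{y}=\sigma^n(\mathbf{x})$.
\begin{itemize}
\item If $\mathbf{y}\in[\sigma(\mathbf{b}),\mathbf{a}]$, the lower bound $\mathbf{y}\succeq\sigma(\mathbf{b})$ is immediate. The upper bound $\mathbf{y}\preceq\sigma(\mathbf{a})$ must come from the previous step: if $n\ge1$ and $x_n=0$, then $\sigma^{n-1}(\mathbf{x})\preceq\mathbf{a}$ yields it.
\item If $\mathbf{y}\in[\mathbf{b},\sigma(\mathbf{a})]$, symmetrically the upper bound is immediate and the lower bound comes from the previous step when $x_n=1$.
\end{itemize}
The mixed cases, where the digit switches between positions $n$ and $n+1$, and the case $n=0$ are where the real work lies; I expect this bookkeeping to be the main obstacle of the whole proposition. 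The first thing I will try is to follow the maximal run of equal digits preceding position $n+1$ back to where it starts, and compare $\mathbf{y}$ with $\sigma(\mathbf{a})$ (respectively $\sigma(\mathbf{b})$) through that run using the defining inequalities of $F$ at each intermediate shift. If this does not close the argument, I will check whether an extra comparison such as $\sigma(\mathbf{b})\preceq\mathbf{a}$ or $\mathbf{b}\preceq\sigma(\mathbf{a})$ is implicitly needed, and treat the degenerate configurations (those where $F$ is essentially trivial) separately.

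For (2), I combine (1) with the same digit-stripping. If $\mathbf{x}\in E(\mathbf{a},\mathbf{b})\cap[\sigma(\mathbf{b}),\sigma(\mathbf{a})]$, induct on $n$ to show $\sigma^n(\mathbf{x})\in[\sigma(\mathbf{b}),\sigma(\mathbf{a})]$ for all $n$. The $E$-condition at step $n$ forces either $\sigma^n(\mathbf{x})\preceq\mathbf{a}$ or $\sigma^n(\mathbf{x})\succeq\mathbf{b}$, and by the displayed equivalences this gives one of the two bounds for $\sigma^{n+1}(\mathbf{x})$. The other bound is inherited from the inductive hypothesis, passing it through the first digit. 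Thus $\mathbf{x}\in\mathcal{K}(\sigma(\mathbf{b}),\sigma(\mathbf{a}))=F(\mathbf{a},\mathbf{b})$. Conversely, $F(\mathbf{a},\mathbf{b})$ lies in $E(\mathbf{a},\mathbf{b})$ directly from the definitions (every shift is $\preceq\mathbf{a}$ or $\succeq\mathbf{b}$), and it lies in $[\sigma(\mathbf{b}),\sigma(\mathbf{a})]$ by (1) with $n=0$.

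For (3), the inequality $\le$ follows from $F\subset E$, since topological entropy is monotone on closed invariant subsets. For $\ge$ I will use the variational principle. Let $\mu$ be an ergodic measure on $E(\mathbf{a},\mathbf{b})$ with $h_\mu>0$, and let $X=\operatorname{supp}\mu$, a closed invariant set. I will show $X\subset[\sigma(\mathbf{b}),\sigma(\mathbf{a})]$, up to the fixed points $0^\infty,1^\infty$ (which carry zero entropy), as follows.
\begin{itemize}
\item Let $\mathbf{m}=\min X$. Since $X$ is not just the fixed points, $X$ contains a point $\mathbf{z}$ whose shift $\sigma(\mathbf{z})$ begins with $1$; combined with $\mathbf{z}\in E$, this forces $\mathbf{z}\succeq\mathbf{b}$, so $\sigma(\mathbf{z})\succeq\sigma(\mathbf{b})$.
\item By minimality of $\mathbf{m}$ and invariance of $X$, pushing this comparison through gives $\mathbf{m}\succeq\sigma(\mathbf{b})$.
\item The symmetric argument with $\max X$ and $\mathbf{a}$ gives $\max X\preceq\sigma(\mathbf{a})$.
\end{itemize}
Then $X\subset E\cap[\sigma(\mathbf{b}),\sigma(\mathbf{a})]=F$ by (2), so $h_\mu\le h_{top}(F)$. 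Taking the supremum over such $\mu$ gives $h_{top}(E,\sigma)\le h_{top}(\mathcal{K}(\sigma(\mathbf{b}),\sigma(\mathbf{a})),\sigma)$, completing (3).
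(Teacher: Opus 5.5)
Your proof of (1) is incomplete. The inclusion $F(\mathbf{a},\mathbf{b})\subset\mathcal{K}(\sigma(\mathbf{b}),\sigma(\mathbf{a}))$ is left as a plan (tracing runs backwards, possible extra hypotheses, degenerate cases). Part (2) inherits the gap, because you obtain $F\subset[\sigma(\mathbf{b}),\sigma(\mathbf{a})]$ from (1).

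The missing idea is a one-line fact: for every $\mathbf{c}\in\{0,1\}^{\mathbb{N}}$ one has $0\mathbf{c}\preceq\mathbf{c}\preceq 1\mathbf{c}$. Indeed, if $\mathbf{c}=0^k1\cdots$, then $0\mathbf{c}=0^{k+1}1\cdots$ first differs from $\mathbf{c}$ at place $k+1$, where it is smaller; $1\mathbf{c}$ is symmetric. With $a_1=0$ and $b_1=1$ this gives $\mathbf{a}\preceq\sigma(\mathbf{a})$ and $\sigma(\mathbf{b})\preceq\mathbf{b}$. So the first alternative in $F$ yields $\sigma(\mathbf{b})\preceq\sigma^n(\mathbf{x})\preceq\mathbf{a}\preceq\sigma(\mathbf{a})$, and the second yields $\sigma(\mathbf{b})\preceq\mathbf{b}\preceq\sigma^n(\mathbf{x})\preceq\sigma(\mathbf{a})$. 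You need no look-back, no digit-switch cases and no extra comparison between $\mathbf{a}$ and $\mathbf{b}$; this is the paper's argument. Your induction in (2) already uses this fact silently: ``passing the bound through the first digit'' means deducing $\mathbf{z}\succeq\sigma(\mathbf{b})$ from $0\mathbf{z}\succeq\sigma(\mathbf{b})$, which is exactly $\mathbf{z}\succeq 0\mathbf{z}$. State it once and (1) and (2) are immediate.

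For (3) your route (variational principle plus supports of ergodic measures) differs from the paper's. The paper shows every $\mathbf{x}\in E\setminus\{0^\infty,1^\infty\}$ has an iterate in $\mathcal{K}$, so $\omega(\mathbf{x},\sigma)\subset\mathcal{K}$, and then uses $h_{top}(E,\sigma)=\sup_{\mathbf{x}}h_{top}(\omega(\mathbf{x},\sigma),\sigma)$. Your key step fails as written:
\begin{enumerate}
\item A point $\mathbf{z}$ with $\sigma(\mathbf{z})$ beginning with $1$ need not satisfy $\mathbf{z}\succeq\mathbf{b}$; you need $z_1=1$.
\item More seriously, a lower bound for one point of $X$ says nothing about $\mathbf{m}=\min X$. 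Minimality gives $\mathbf{m}\preceq\sigma(\mathbf{z})$, which is the wrong direction.
\end{enumerate}

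You must work with a preimage of $\mathbf{m}$ itself. Since $X=\mathrm{supp}\,\mu$ satisfies $\sigma(X)=X$, pick $\mathbf{w}\in X$ with $\sigma(\mathbf{w})=\mathbf{m}$. If $w_1=1$, then $\mathbf{w}\succeq\mathbf{b}$ and so $\mathbf{m}\succeq\sigma(\mathbf{b})$. If $w_1=0$, then $\mathbf{w}=0\mathbf{m}\preceq\mathbf{m}$, and minimality forces $\mathbf{m}=0^\infty$.

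That last case cannot be dismissed as ``up to the fixed points''. $X$ is closed and you need all of it inside $F$, and $0^\infty$ genuinely can lie in the support of a positive-entropy ergodic measure: take $\mathbf{a}=01^\infty$, $\mathbf{b}=10^\infty$, for which $E$ is the full shift. You must prove $\sigma(\mathbf{b})=0^\infty$ in this case:
\begin{enumerate}
\item $\mu$ is non-atomic, so $0^\infty$ is not isolated in $X$.
\item Hence a point with dense orbit in $X$ that is not eventually $0^\infty$ contains words $10^k1$ with $k$ arbitrarily large.
\item The shifts starting at these $1$'s lie in $E$ and begin with $1$, so they are $\succeq\mathbf{b}$, which forces $\mathbf{b}=10^\infty$.
\end{enumerate}
The bound $\max X\preceq\sigma(\mathbf{a})$ needs the symmetric argument, with $1^\infty$ as the exceptional case.

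With these repairs your approach works. It trades the paper's pointwise combinatorial claim and the $\omega$-limit entropy identity for this analysis of the support.
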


 \begin{proof}
 	 In the rest of the proof, we write $\mathcal{K}=\mathcal{K}(\sigma (\mathbf{b}),\sigma(\mathbf{a})),F=F(\mathbf{a},\mathbf{b}),E=E(\mathbf{a},\mathbf{b})$ for convenience.
 	
 	(1) We first show  that  the set $ \mathcal{K} $  coincides with $F$.
 	Let $\mathbf{x}\in F$ and  $n\geq 0$.
 	Then according to the definition of $ F $,
 		we have $\sigma(\mathbf{b})\preceq \sigma^n (\mathbf{x})\preceq  \mathbf{a}$ or $b\preceq  \sigma^n (\mathbf{x})\preceq \sigma (\mathbf{a})$.
 	Notice that  $\mathbf{a}\preceq  \sigma (\mathbf{a})$ and $\sigma(\mathbf{b})\preceq  \mathbf{b}$ by the condition $a_1=0$  and  $b_1=N$, it turns out that
 	$\sigma(\mathbf{b})\preceq \sigma^n (\mathbf{x})\preceq  \mathbf{a}\preceq  \sigma (\mathbf{a})$  or  $\sigma(\mathbf{b})\preceq  \mathbf{b}\preceq  \sigma^n (\mathbf{x})\preceq \sigma (\mathbf{a})$ holds.
     Eventually, it always holds that
 	$\sigma(\mathbf{b})\preceq  \sigma^n (\mathbf{x})\preceq \sigma (\mathbf{a})$ and thus $\mathbf{x}\in \mathcal{K}$.
 	
 	Pick $\mathbf{x}=(x_i)\in \mathcal{K}$ and  $n\geq 0$.
 	Then for any $ m\geq 0 $, the sequence $ \mathbf{x} $ satisfies that  $\sigma(\mathbf{b})\preceq  \sigma^{m} (\mathbf{x})\preceq \sigma (\mathbf{a})$  by the definition of the set $ \mathcal{K} $.
 	If $ x_{n+1}=0 $, then from $ x_{n+2}x_{n+3}\dotsb \preceq  a_2a_3\dotsb  $, we have $x_{n+1}x_{n+2}\dotsb \preceq  a_1a_2\dotsb  $.
 	Therefore, in this case, the sequence $\mathbf{x}  $ is contained in $ F $.
 	If  $ x_{n+1}=1$, then from
 	$ b_2b_3\dotsb \preceq  x_{n+2}x_{n+3}\dotsb  $, we obtain that $ b_1b_2\dotsb \preceq  x_{n+1}x_{n+2}\dotsb $.
 	This also implies that $ \mathbf{x} $ is an element of the set $ F $.

 	\medskip

    (2)
 	It is clearly that $\mathcal{K}\subset [\sigma(\mathbf{b}),\sigma(\mathbf{a})]$ and $F\subset E$.
 	So $F=F\cap \mathcal{K}\subset E\cap[\sigma(\mathbf{b}),\sigma(\mathbf{a})]$.
 	For the other direction, it is sufficient to show that  if $\mathbf{x}\in E\cap [\sigma(\mathbf{b}),\sigma(\mathbf{a})]$, then
 	$\sigma(\mathbf{x})\in [\sigma(\mathbf{b}),\sigma(\mathbf{a})]$ because of $ \sigma(E)\subset E $.
 	If $x_1=0$, then from  $\mathbf{x}\preceq  \mathbf{a}$ and $\sigma(\mathbf{x})\preceq  \sigma (\mathbf{a})$,
 	we get that $\sigma (\mathbf{b})\preceq  \mathbf{x}\preceq  \sigma (\mathbf{x})\preceq  \sigma (\mathbf{a})$ and thus $\sigma(\mathbf{x})\in [\sigma(\mathbf{b}),\sigma(\mathbf{a})].$
 	If $x_1= 1$, then from $\mathbf{b}\preceq  \mathbf{x}$ and $\sigma(\mathbf{b})\preceq  \sigma (\mathbf{x})$,
 	it turns out that  $\sigma (\mathbf{b})\preceq  \sigma(\mathbf{x})\preceq  \mathbf{x}\preceq  \sigma (\mathbf{a})$ and thus $\sigma(\mathbf{x})\in [\sigma(\mathbf{b}),\sigma(\mathbf{a})].$
 	
 	\medskip
 	
 	(3) It is easy to see that $E=\{0^\infty, 1^\infty\}$ if $\mathbf{a}=00a_3a_4\cdots$ or $\mathbf{b}=11b_3b_4\cdots$ and therefore $h_{top}(\mathcal{K},\sigma)=h_{top}(E,\sigma)=0.$
     It remains to consider the case    when $\mathbf{a}=01a_3a_4\dotsb$ and  $\mathbf{b}=10b_3b_4\dotsb$.
 	We claim that  for each  $ \mathbf{x}\in E\setminus \{0^\infty, 1^\infty\}$, there exists $m\geq 0$ such that $\sigma^m(\mathbf{x})\in \mathcal{K}$.
 	If it is not true, then there exists some  sequence $ \mathbf{x}\in E\setminus \{0^\infty, 1^\infty\} $ such that $ \sigma^n(\mathbf{x})\not \in \mathcal{K}=E\cap [\sigma(\mathbf{b}),\sigma(\mathbf{a}) ] $ for any $ n\geq 0 $.
 	Especially, $ \mathbf{x} $ is not in $ \mathcal{K}  $.
 	And furthermore, $ \mathbf{x} $ is  not in $ [\sigma(\mathbf{b}),\sigma(\mathbf{a})] $  either.
 	As $ \sigma(E)\subset E $ and $ \sigma(\mathbf{x})\not \in \mathcal{K}=E\cap [\sigma(\mathbf{b}),\sigma(\mathbf{a})] $, it turns out that $ \sigma(\mathbf{x})\in E\setminus \{0^\infty, 1^\infty\}  $  and
 	 $\sigma(\mathbf{x})\not\in [\sigma(\mathbf{b}),\sigma(\mathbf{a})]$.
 	 Finally, we can assume that $ \sigma^n(\mathbf{x})\not \in [\sigma(\mathbf{b}),\sigma(\mathbf{a})] $
 	  for all   $n\geq0$.
 	
 	If $x_1=0$, then $\mathbf{x}\preceq  \mathbf{a}  $ because of $ \mathbf{x}\in E $.
 	It follows that  $ \sigma(\mathbf{x})\preceq  \sigma(\mathbf{a}) $.
 	Recalled that we have assumed that $ \sigma^n(\mathbf{x})\not \in [\sigma(\mathbf{b}),\sigma(\mathbf{a})] $ for any $ n\geq 0 $, we have $ \sigma(\mathbf{x}) \prec  \sigma (\mathbf{b}) $.
 	This implies that $x_2=x_3=x_4=\cdots=0$.

 	If $x_1=1$, then $ \mathbf{b}\preceq  \mathbf{x}  $ because of $ \mathbf{x}\in E $.
 	It follows that $ \sigma(\mathbf{b})\preceq  \sigma(\mathbf{x})  $ and $ \sigma(\mathbf{a})\prec  \sigma (\mathbf{x}) $ by the same hypothesis.
 	Finally,   we get $x_2=x_3=x_4=\cdots=1$.
 	
 	So far, the proof of the claim is finished.
 	 And for all $   \mathbf{x}\in E\setminus \{0^\infty, 1^\infty\}$, the $\omega-$limit set $\omega(\mathbf{x},\sigma)$  is contained in  $ \mathcal{K}$.
 	$$h_{top}(\mathcal{K},\sigma)\leq h_{top}(E,\sigma)=\sup_{\mathbf{x}\in E}h_{top}(\omega(\mathbf{x},\sigma),\sigma)\leq h_{top}(\mathcal{K},\sigma).$$
 	The ``$=$" in the above equation is a general property of topological entropy \cite{Denker1976}.
 \end{proof}

 The next result is from the Lemma 3 in \cite{BSV14}.
 \begin{prop} \label{lemma3}
 	Let $\mathbf{a}=(a_i)$,  $b=(b_i)\in \{0,1\}^{\mathbb{N}}$ with $a_1=0$,  $b_1=1$ and $\mathbf{a},\mathbf{b}\in E(\mathbf{a},\mathbf{b})$.
 	The entropy $h_{top}(E(\mathbf{a},\mathbf{b}),\sigma)>0$.
 	Then $h_{top}(E(\mathbf{a},\mathbf{b}),\sigma)=-\ln\lambda$,
 	where $\lambda\in (0,1)$ is the smallest positive root of the power series
 	\[\mathcal{K}(z)\colon=\sum_{n=1}^{\infty}(b_n-a_n){z^{n-1}}.\]
 \end{prop}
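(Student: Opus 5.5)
The plan is to realize $(E(\mathbf{a},\mathbf{b}),\sigma)$ as a symbolic model of a piecewise linear interval map of constant slope $1/\lambda$. I will then read off the entropy from the slope via the Misiurewicz--Szlenk formula. For $z\in(0,1)$ put $\pi_z(\mathbf{x})=\sum_{n\ge1}x_nz^{n-1}$, so that
\[
\mathcal{K}(z)=\pi_z(\mathbf{b})-\pi_z(\mathbf{a}).
\]
Since $b_1-a_1=1$, we have $\mathcal{K}(0)=1>0$. Hence $\mathcal{K}(z)>0$ on $[0,\lambda)$, where $\lambda$ is the smallest positive zero. The positive-entropy hypothesis will be used to guarantee that such a zero exists in $(0,1)$, and that it is what we want rather than $1$.

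\emph{Step 1 (monotonicity).} I will show that for $z\in(0,\lambda]$ the map $\pi_z$ is order preserving on $E(\mathbf{a},\mathbf{b})$: if $\mathbf{x}\prec\mathbf{y}$ in $E$, then $\pi_z(\mathbf{x})\le\pi_z(\mathbf{y})$. Let $k$ be the first index with $x_k\ne y_k$, so $x_k=0$ and $y_k=1$. Since $E$ is $\sigma$-invariant and avoids $(\mathbf{a},\mathbf{b})$, we get $\sigma^{k-1}\mathbf{x}\preceq\mathbf{a}$ and $\mathbf{b}\preceq\sigma^{k-1}\mathbf{y}$. It therefore suffices to prove
\[
\pi_z(\mathbf{w})\le\pi_z(\mathbf{a}) \text{ for } \mathbf{w}\in E,\ \mathbf{w}\preceq\mathbf{a},
\qquad
\pi_z(\mathbf{w})\ge\pi_z(\mathbf{b}) \text{ for } \mathbf{w}\in E,\ \mathbf{w}\succeq\mathbf{b},
\]
and then combine these with $\pi_z(\mathbf{a})\le\pi_z(\mathbf{b})$, which is $\mathcal{K}(z)\ge0$.

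These two inequalities I will prove by a comparison/induction argument. I truncate at level $N$ and induct on the position of the first disagreement between $\mathbf{w}$ and $\mathbf{a}$ (respectively $\mathbf{b}$). At each disagreement the tail of $\mathbf{w}$ is again compared with $\mathbf{a}$ or $\mathbf{b}$, using $\mathbf{a},\mathbf{b}\in E$. The resulting telescoping sum has every term controlled by a nonnegative multiple of $\mathcal{K}(z)$. This is the step I expect to be the main obstacle: the bookkeeping of the alternating comparisons with $\mathbf{a}$ and with $\mathbf{b}$ must be arranged so that only the sign of $\mathcal{K}$ on $(0,\lambda]$ enters.

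\emph{Step 2 (semiconjugacy).} At $z=\lambda$ we have $\pi_\lambda(\mathbf{a})=\pi_\lambda(\mathbf{b})$, so the hole is collapsed to a single point $c$. On $J=\pi_\lambda(E)$ the identity
\[
\pi_\lambda(\sigma\mathbf{x})=\frac{\pi_\lambda(\mathbf{x})-x_1}{\lambda}
\]
defines a map $f$ with $f\circ\pi_\lambda=\pi_\lambda\circ\sigma$. On each side of $c$ the map $f$ is affine with slope $1/\lambda$, so it extends to a piecewise monotone map of the interval $[\inf J,\sup J]$ with two laps. By Step 1, $\pi_\lambda$ is monotone, and its fibres are the gaps collapsed by $\pi_\lambda$. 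A standard argument (fibres are intervals in the order, and non-injectivity happens only on a countable set) gives $h_{top}(E,\sigma)=h_{top}(f)$, for instance via the variational principle, since no ergodic measure of positive entropy sees the non-injectivity.

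\emph{Step 3 (entropy from slope).} For a piecewise monotone map with constant slope $s>1$ and topologically transitive core, Misiurewicz--Szlenk gives $h_{top}=\ln s=-\ln\lambda$. To exclude the possibility that the true entropy corresponds to a larger root, I will argue in reverse. Let $e^{-h}$ be the value determined by the entropy $h=h_{top}(E,\sigma)>0$. Parry/Milnor--Thurston semiconjugacy to a constant slope $e^{h}$ model produces a $z_0=e^{-h}$ with $\pi_{z_0}(\mathbf{a})=\pi_{z_0}(\mathbf{b})$ and $\pi_{z_0}$ monotone. Step 2 then gives $h\ge-\ln\lambda$, since $\lambda\le z_0$. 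Steps 1--2 applied at $\lambda$ give $h=-\ln\lambda$, which yields equality.
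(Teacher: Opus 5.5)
\textbf{Verdict: there is a genuine gap.} The paper gives no proof of this statement; it imports it from Lemma~3 of the reference BSV14. So your argument has to stand on its own, and as written it never proves the upper bound $h_{top}(E(\mathbf{a},\mathbf{b}),\sigma)\le-\ln\lambda$.

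\emph{Step~2 is false as stated.} You claim that $\pi_\lambda$ is injective off a countable set and that no positive-entropy ergodic measure sees its fibres. Take $\mathbf{a}=011(100)^\infty$ and $\mathbf{b}=100(011)^\infty$.
\begin{itemize}
\item One checks that every concatenation of the blocks $011$ and $100$ lies in $E$, including $\mathbf{a}$ and $\mathbf{b}$ themselves.
\item A direct computation gives $\mathcal{K}(z)=(1-z-z^2)(1-2z^3)/(1-z^3)$, so $\lambda=(\sqrt5-1)/2$.
\item Since $\lambda+\lambda^2=1$, both blocks have $\pi_\lambda$-weight $1$. Hence $\pi_\lambda$ sends this whole subshift, of entropy $\frac13\ln2>0$, to the single point $c=1/(1-\lambda^3)$.
\end{itemize}
Only the set of \emph{image} points with a nontrivial fibre is countable; the fibres themselves can carry positive-entropy ergodic measures. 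Here $\frac13\ln2<-\ln\lambda$, so the proposition survives, but your justification does not. The factor map therefore gives only $h_{top}(E)\ge h_{top}(f)=-\ln\lambda$.

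\emph{Step~3 does not supply the reverse inequality.} You hand it to a ``Parry/Milnor--Thurston'' semiconjugacy at slope $e^{h}$. Parry's theorem needs transitivity, and Milnor--Thurston treats continuous maps. The version for maps with one discontinuity is essentially the kneading-determinant entropy formula you are asked to prove. Also, $\lambda\le z_0=e^{-h}$ gives $h\le-\ln\lambda$, not $h\ge-\ln\lambda$ as you wrote.

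\emph{Step~1 is not established, and the proposed bookkeeping cannot establish it.} Each difference $\pi_z(\mathbf{a})-\pi_z(\mathbf{w})$ splits into $\mathcal{K}(z)$ plus \emph{two} differences of the same type, each weighted by $z^{j-1}\le z$. A truncated induction therefore leaves an error of order $(2z)^N/(1-z)$, which is useless for $z\ge\frac12$. But $\mathcal{K}(z)\ge(1-2z)/(1-z)>0$ on $[0,\frac12)$, so always $\lambda\ge\frac12$, and the induction never reaches the relevant range.

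\emph{The missing idea is continuation in $z$.}
\begin{itemize}
\item If $\pi_s$ is monotone on $E$, your own decomposition gives the quantitative bound $\pi_s(\mathbf{y})-\pi_s(\mathbf{x})\ge s^{k-1}\mathcal{K}(s)$, where $k$ is the first index of disagreement.
\item Write the left side as $s^{k-1}D(s)$. Here $D$ is a power series with coefficients in $\{-1,0,1\}$ whose derivative is bounded by $(1-s)^{-2}$, uniformly in the pair.
\item It follows that the set of $s$ at which $\pi_s$ is monotone on $E$ is open and closed in $[0,\lambda)$. It contains $[0,\frac12]$, so it is all of $[0,\lambda)$, and by continuity it also contains $\lambda$.
\end{itemize}

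The same separation bound closes the first gap without Step~3. For $z<\lambda$, the $\pi_z$-images of points of $E$ chosen in distinct cylinders of length $n$ are $z^{n-1}\mathcal{K}(z)$-separated inside $[0,1/(1-z)]$. So $E$ has $O(z^{-n})$ words of length $n$, and $h_{top}(E)\le-\ln z$. Letting $z\uparrow\lambda$ gives $h_{top}(E)\le-\ln\lambda$. The same count shows $h_{top}(E)=0$ when $\mathcal{K}$ has no zero in $(0,1)$, which is where the positive-entropy hypothesis enters.
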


 \noindent {\itshape The proof of Theorem \ref{entropy}}. Let $\mathbf{a}=0{\mathbf{u}}=0u_1u_2\cdots, \mathbf{b}=1{\mathbf {t}}=1t_1t_2\cdots$.
 Observe the proof of the third statement of  Proposition \ref{subshift}, it only needs to consider that case when $ u_1=1 $ and $ t_1=0 $.
 We claim that both  $ \mathbf{a} $ and $ \mathbf{b} $ are actually contained in $ E(\mathbf{a},\mathbf{b})  $.

 If $  \mathbf{a} \not \in  E(\mathbf{a},\mathbf{b}) $, then there exists some $ n\geq 0 $ such that
 \begin{equation}
 0u_1u_2u_3\dotsb \prec  u_{n}u_{n+1}u_{n+2}\dotsb \prec 1t_1t_2t_3\dotsb.\label{3.10.1}
 \end{equation}
As the sequence $ \mathbf{u} $ satisfies $ \sigma^n(u_1u_2u_3\dotsb)\preceq  u_1u_2u_3\dotsb$, it follows that $ u_{n}=1 $.
Then, by \eqref{3.10.1}, we have $ u_{n+1}u_{n+2}\dotsb \prec  t_1t_2\dotsb $, which is contract to the premise that $ \mathbf{u}\in \mathcal{K}(\mathbf{t},\mathbf{u} ) $.
Thus, the sequence $ \mathbf{a} $ is in fact contained in $ E(\mathbf{a},\mathbf{b} ) $.

Next, we are going to show that the sequence $ \mathbf{b} $ is also contained in $ E(\mathbf{a},\mathbf{b} ) $.
 If $  \mathbf{b} \not \in  E(\mathbf{a},\mathbf{b}) $, then there exists some $ m\geq 0 $ such that
\begin{equation}
	0u_1u_2u_3\dotsb \prec  t_{m}t_{m+1}t_{m+2}\dotsb \prec 1t_1t_2t_3\dotsb.\label{3.10.2}
\end{equation}
Because $t_1t_2\dotsb \preceq  t_{m+1}t_{m+2}\dotsb $, the number $ t_m $ must be equal to $ 0 $.
By \eqref{3.10.2}, we have $ u_1u_2u_3\dotsb\prec  t_{m+1}t_{m+2}t_{m+3}\dotsb $, which is contract to the condition that $ \mathbf{t}\in \mathcal{K}(\mathbf{t},\mathbf{u} ) $.
Therefore, we finish the proof of our claim.
 It follows that
 $h_{top}(\mathcal{K},\sigma)=h_{top}(E(\mathbf{a},\mathbf{b}),\sigma)$ from the Proposition \ref{subshift}.
 It is obviously that the smallest positive root of
 $\mathcal{K}(z)$ is just the smallest positive solution of the equation $\sum_{n=1}^{\infty}(u_n-t_n){z^{n}}=1$.
 So we complete the proof.

We refer the reader to \cite[Theorem 2.3]{KL07} for the following useful lemma concerned about the property the quasi-greedy $ \beta $-expansion of $ 1 $.
 \begin{lem}\label{alpha}
 	For any $ \beta>1 $,  as the quasi-greedy $ \beta $-expansion of $ 1 $, the sequence $ \alpha(\beta) $	satisfies that $ \sigma^n(\alpha(\beta))\preceq  \alpha(\beta) $ for any $ n\geq 0 $.
 \end{lem}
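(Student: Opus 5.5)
We prove Lemma~\ref{alpha} directly from a recursive description of $\alpha(\beta)$, using only the uniqueness in Lemma~\ref{lem1}. Write $\alpha_i=\alpha(\beta)_i$ and $P_m=.\alpha_1\alpha_2\dotsb\alpha_m$ (with $P_0=0$). The plan has three steps:
\begin{enumerate}
\item characterize $\alpha(\beta)$ by a strict greedy rule;
\item prove a tail estimate;
\item derive the lexicographic inequality by contradiction.
\end{enumerate}

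\textbf{Step 1: the strict greedy rule.} Define digits $a_1,a_2,\dotsc$ recursively: $a_i$ is the largest digit in $\{0,\dotsc,\gamma\}$ such that
\[
.a_1\dotsb a_{i-1}a_i<1 .
\]
The choice $a_i=0$ is always allowed once the previous partial sums are $<1$, so the recursion is well defined.

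I claim $(a_i)=\alpha(\beta)$. By Lemma~\ref{lem1}, $\alpha(\beta)$ is the unique not eventually zero expansion of $1$ with digits in $\{0,\dotsc,\gamma\}$. So it suffices to show that $(a_i)$ is an expansion of $1$ and is not eventually zero.

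The key estimate, proved by induction on $m$, is
\[
1-.a_1\dotsb a_m\le \beta^{-m}.
\]
\begin{itemize}
\item If $a_m<\gamma$, maximality of $a_m$ gives $.a_1\dotsb a_{m-1}(a_m+1)\ge 1$, which is exactly the bound.
\item If $a_m=\gamma$, the induction hypothesis gives
\[
1-.a_1\dotsb a_m\le \beta^{-(m-1)}-\gamma\beta^{-m}=\beta^{-m}(\beta-\gamma)\le\beta^{-m},
\]
because $\beta-\gamma\le 1$ by the definition of $\gamma$.
\end{itemize}
Hence $.a_1\dotsb a_m\to 1$, so $(a_i)$ is an expansion of $1$. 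It is not eventually zero: if it were, some finite partial sum would equal $1$, contradicting the strict inequality in the definition.

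Applying the same bound to $\alpha(\beta)=(a_i)$ gives $0<1-P_m\le\beta^{-m}$ for every $m\ge0$.

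\textbf{Step 3: the contradiction argument.} Suppose $\sigma^n(\alpha(\beta))\succ\alpha(\beta)$ for some $n\ge1$. Let $k$ be the first index where they differ. Then $\alpha_{n+j}=\alpha_j$ for $j<k$ and $\alpha_{n+k}\ge\alpha_k+1$. In particular $\alpha_k<\gamma$, so maximality of $\alpha_k$ in Step 1 gives
\[
.\alpha_1\dotsb\alpha_{k-1}(\alpha_k+1)\ge1 .
\]
Therefore
\[
P_{n+k}\ \ge\ P_n+\beta^{-n}\,.\alpha_1\dotsb\alpha_{k-1}(\alpha_k+1)\ \ge\ P_n+\beta^{-n}\ \ge\ 1,
\]
where the last inequality is the tail estimate $1-P_n\le\beta^{-n}$ from Step 2. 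This contradicts $P_{n+k}<1$. The case $n=0$ is trivial.

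The main obstacle is the case $\alpha_n=\gamma$ in the tail estimate. There maximality of $\alpha_n$ gives nothing, and the bound $\beta-\gamma\le1$ is precisely what keeps the induction going.
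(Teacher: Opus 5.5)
\textbf{The identification $(a_i)=\alpha(\beta)$ is not justified.} Your tail estimate $0<1-P_m\le\beta^{-m}$ is correct for the strict-greedy sequence $(a_i)$; the bound $\beta-\gamma\le 1$ is exactly what carries the case $a_m=\gamma$. The contradiction $P_{n+k}\ge P_n+\beta^{-n}\ge 1$ in Step~3 is also correct for $(a_i)$.

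The problem is that you base the identification on the uniqueness in Lemma~\ref{lem1}, and that uniqueness is false. The paper's proof of the lemma only constructs one expansion; it never shows there is no other. For the golden mean $\beta=\frac{1+\sqrt5}{2}$ we have $\gamma=1$, and since $\beta^2=\beta+1$, both $(10)^\infty$ and $01^\infty$ are not eventually zero expansions of $1$.

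This matters, because $\sigma(01^\infty)=1^\infty\succ 01^\infty$. So the statement fails for an arbitrary infinite expansion of $1$, and its whole content lies in knowing which expansion $\alpha(\beta)$ is. Showing that $(a_i)$ is \emph{some} infinite expansion of $1$ does not show that it is $\alpha(\beta)$.

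\textbf{The repair is short.} Take $\alpha(\beta)$ in the standard sense used by Komornik and Loreti, whose Theorem~2.3 the paper cites in place of a proof: the lexicographically largest not eventually zero expansion of $1$ with digits in $\{0,\dotsc,\gamma\}$. Your sequence is this maximum. Suppose $(c_i)$ is an infinite expansion of $1$ with $(c_i)\succ(a_i)$, and let $k$ be the first index with $c_k\neq a_k$. Then $c_k>a_k$, so maximality of $a_k$ gives $.c_1\dotsb c_k=.a_1\dotsb a_{k-1}c_k\ge 1$. Because $(c_i)$ is not eventually zero, $\sum_i c_i\beta^{-i}>.c_1\dotsb c_k\ge 1$, a contradiction.

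Alternatively, match $(a_i)$ directly with the construction in the proof of Lemma~\ref{lem1}:
\begin{itemize}
\item The strict and non-strict greedy choices agree as long as the greedy partial sums of $b(1,\beta)$ stay below $1$.
\item If $b(1,\beta)=b_1\dotsb b_N0^\infty$ with $b_N\ge 1$, the strict rule gives $a_N=b_N-1$ and $1-P_N=\beta^{-N}$.
\item Since $.a_1\dotsb a_{N+j}<1$ if and only if $.a_{N+1}\dotsb a_{N+j}<1$, the rule restarts after position $N$, yielding $(b_1\dotsb b_{N-1}(b_N-1))^\infty$.
\end{itemize}

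With either fix, your argument becomes a complete, self-contained and elementary proof. That is more than the paper offers, since it gives no argument for this lemma and only cites the literature.
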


 \begin{proof}[Proof of Theorem~\ref{main1}]
  Let $ \mathbf{t} $ be the quasi-greedy $ \beta $-expansion of $ t $.
  Pick $ \mathbf{u}=\alpha(\beta) $.
  Then, from Lemma~\ref{alpha}, we know that both $ \mathbf{t} $ and $ \mathbf{u} $ satisfy the condition of Theorem~\ref{entropy}.
  From Lemma~\ref{dimension-entropy} and the statement concerned about the relation between $ K(t) $ and $ \mathcal{K}(\mathbf{t},\alpha(\beta)) $,
  we can conclude that $\dimh(K(t)) $ is equal to $ -\frac{\ln\lambda }{\ln\beta}$ with $ \lambda\in (0,1) $ being the smallest positive solution of the equation
  \[
  \sum_{n=1}^{\infty}(\alpha(\beta)_n-t_n)z^n=1.
  \]
  If the equation has no root in $ (0,1) $, then the Hausdorff dimension of $ K(t) $ is zero.
   \end{proof}

  Now we turn to discuss the calculation of the Hausdorff dimension of $K(t)$ for any $\beta\in (1,2]$ and $t\in [0,1)$. It is sufficient to study the general case of $\mathbf{t}\not\in\mathcal{K}(\mathbf{t},\mathbf{u})$ or $\mathbf{u}\not\in\mathcal{K}(\mathbf{t},\mathbf{u})$. The next proposition shows that the general case can be transformed to the spacial one in Theorem \ref{entropy}.

  \begin{prop}
  If $\mathcal{K}(\mathbf{t},\mathbf{u})\not=\emptyset$
 we denote $\mathbf{a}=\min\mathcal{K}(\mathbf{t},\mathbf{u}),\, \mathbf{b}=\max\mathcal{K}(\mathbf{t},\mathbf{u})$. Then
 \[\mathbf{a},\mathbf{b}\in \mathcal{K}(\mathbf{a},\mathbf{b})=\mathcal{K}(\mathbf{t},\mathbf{u}).\]
  \end{prop}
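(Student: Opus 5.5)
First I would make sure $\mathbf{a}$ and $\mathbf{b}$ exist. The set $\mathcal{K}(\mathbf{t},\mathbf{u})$ is a nonempty closed subset of the compact space $\{0,1\}^{\mathbb{N}}$ (Remark, item (1)). In the product topology the lexicographic order is total, and every nonempty closed set has a least and a greatest element. One way to see this is to build the minimum digit by digit: at step $i$, choose the smallest digit $x_i$ such that the cylinder $[x_1\dotsb x_i]$ still meets the set. Closedness then puts the resulting sequence in the set. So $\mathbf{a}$ and $\mathbf{b}$ are well defined and belong to $\mathcal{K}(\mathbf{t},\mathbf{u})$.

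Next I would prove the inclusion $\mathcal{K}(\mathbf{t},\mathbf{u})\subseteq\mathcal{K}(\mathbf{a},\mathbf{b})$.
\begin{itemize}
\item Take $\mathbf{x}\in\mathcal{K}(\mathbf{t},\mathbf{u})$ and $n\ge 0$.
\item By the Remark, $\mathcal{K}(\mathbf{t},\mathbf{u})$ is $\sigma$-invariant, so $\sigma^n(\mathbf{x})\in\mathcal{K}(\mathbf{t},\mathbf{u})$.
\item Since $\mathbf{a}$ is the minimum and $\mathbf{b}$ the maximum of this set, $\mathbf{a}\preceq\sigma^n(\mathbf{x})\preceq\mathbf{b}$.
\end{itemize}
This holds for every $n$, so $\mathbf{x}\in\mathcal{K}(\mathbf{a},\mathbf{b})$. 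Applying this to $\mathbf{x}=\mathbf{a}$ and $\mathbf{x}=\mathbf{b}$, which lie in $\mathcal{K}(\mathbf{t},\mathbf{u})$ by the first step, gives $\mathbf{a},\mathbf{b}\in\mathcal{K}(\mathbf{a},\mathbf{b})$.

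Finally I would prove the reverse inclusion $\mathcal{K}(\mathbf{a},\mathbf{b})\subseteq\mathcal{K}(\mathbf{t},\mathbf{u})$.
\begin{itemize}
\item Because $\mathbf{a}\in\mathcal{K}(\mathbf{t},\mathbf{u})$ (case $n=0$), we have $\mathbf{t}\preceq\mathbf{a}$; likewise $\mathbf{b}\preceq\mathbf{u}$.
\item Hence $[\mathbf{a},\mathbf{b}]\subseteq[\mathbf{t},\mathbf{u}]$.
\item If $\sigma^n(\mathbf{x})\in[\mathbf{a},\mathbf{b}]$ for all $n$, then $\sigma^n(\mathbf{x})\in[\mathbf{t},\mathbf{u}]$ for all $n$, so $\mathbf{x}\in\mathcal{K}(\mathbf{t},\mathbf{u})$.
\end{itemize}
This gives $\mathcal{K}(\mathbf{a},\mathbf{b})=\mathcal{K}(\mathbf{t},\mathbf{u})$.

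The only step needing real care is the first one: that the minimum and maximum are attained. This is where closedness of $\mathcal{K}(\mathbf{t},\mathbf{u})$ and the compatibility of the lexicographic order with the product topology are used. The lexicographically least sequence of a closed set is the limit of the greedy digit choices. Once the extremal points exist, the two inclusions are immediate from $\sigma$-invariance.
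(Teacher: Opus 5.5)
Your proposal is correct and follows essentially the same approach as the paper: the key inclusion $\mathcal{K}(\mathbf{t},\mathbf{u})\subseteq\mathcal{K}(\mathbf{a},\mathbf{b})$ comes from forward $\sigma$-invariance plus extremality of $\mathbf{a},\mathbf{b}$, and the reverse inclusion from $\mathbf{t}\preceq\mathbf{a}\preceq\mathbf{b}\preceq\mathbf{u}$. Your additional argument that the minimum and maximum are attained (closedness and the digit-by-digit construction), which the paper takes for granted, is a welcome completion; note that only the forward invariance $\sigma(\mathcal{K})\subseteq\mathcal{K}$, immediate from the definition, is actually needed.
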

  \begin{proof}
  Since $\mathbf{t}\preceq\mathbf{a}\preceq\mathbf{b}\preceq\mathbf{u}$ we just prove $\mathcal{K}(\mathbf{a},\mathbf{b})\supseteq\mathcal{K}(\mathbf{t},\mathbf{u}).$ For any $\mathbf{x}\in \mathcal{K}(\mathbf{t},\mathbf{u})$
  and any $n\ge0$, $\sigma^n\mathbf{x}\in\mathcal{K}(\mathbf{t},\mathbf{u})$. So $\mathbf{a}\preceq\sigma^n\mathbf{x}\preceq\mathbf{b}$ and $\mathbf{x}\in \mathcal{K}(\mathbf{a},\mathbf{b})$.
  \end{proof}

  Let's find the minimal value and maximal value of $\mathcal{K}(\mathbf{t},\mathbf{u})(\not=\emptyset)$  in several steps.

  {\bf Step 1.} Let  \[ \mathbf{a^{(1)}}=\left \{\begin{array}{ll}
 		\mathbf{t}, & \text{if }  \sigma^k\mathbf{t}\succeq \mathbf{t},\forall k\ge 0;\\
 		(t_1t_2\dotsb t_{n_1})^\infty, & \text{if }  n_1=\inf\{k\ge1:\sigma^k\mathbf{t}\prec \mathbf{t}\}<\infty,
 	\end{array} \right. \]
\[ \quad \ \ \mathbf{b^{(1)}}=\left \{\begin{array}{ll}
 		\mathbf{u}, & \text{if }  \sigma^k\mathbf{u}\preceq \mathbf{u},\forall k\ge 0;\\
 		(u_1u_2\dotsb u_{m_1})^\infty, & \text{if }  m_1=\inf\{k\ge1:\sigma^k\mathbf{u}\succ \mathbf{u}\}<\infty.
 	\end{array} \right. \]
 Then $\sigma^k\mathbf{a^{(1)}}\succeq \mathbf{a^{(1)}},\sigma^k\mathbf{b^{(1)}}\preceq \mathbf{b^{(1)}},\forall k\ge1$ and  $\mathcal{K}(\mathbf{t},\mathbf{u})=\mathcal{K}(\mathbf{a^{(1)}},\mathbf{b^{(1)}}).$

  \medskip

  {\bf Step 2.} Let
    \[ \mathbf{a^{(2)}}:=\tau(\mathbf{a^{(1)}})=\left \{\begin{array}{ll}
 		\mathbf{a^{(1)}}, & \text{if }  \sigma^k\mathbf{a^{(1)}}\preceq \mathbf{u},\forall k\ge 0;\\
 		(t_1t_2\dotsb (t_{n_2}+1))^\infty, & \text{if }  n_2=\inf\{k\ge1:\sigma^k\mathbf{a^{(1)}}\succ \mathbf{u}\}<\infty.
 	\end{array} \right. \]
   Then $\sigma^k\mathbf{a^{(2)}}\succeq \mathbf{a^{(2)}}\succeq\mathbf{a^{(1)}},\forall k\ge1$ and  $\mathcal{K}(\mathbf{a^{(1)}},\mathbf{b^{(1)}})=\mathcal{K}(\mathbf{a^{(2)}},\mathbf{b^{(1)}}).$

  We repeat the action $\tau$ on $\mathbf{a^{(2)}}$ to get $\mathbf{a^{(3)}}$ if $n_3=\inf\{k\ge1:\sigma^k\mathbf{a^{(2)}}\succ \mathbf{u}\}<\infty$.
  The process will be stopped in finite times repeating since $n_1>n_2>n_3>\dotsb.$  We will get a sequence $\mathbf{a^{(1)}}\prec \mathbf{a^{(2)}}\prec\dotsb\prec \mathbf{a^{(h)}}=\tau(\mathbf{a^{(h)}})$ in the process
  and $\mathbf{a}:=\mathbf{a^{(h)}}=\min\mathcal{K}(\mathbf{t},\mathbf{u}).$

   \medskip

   {\bf Step 3.} Let
   \[ \mathbf{b^{(2)}}:=\theta(\mathbf{b^{(1)}})=\left \{\begin{array}{ll}
 		\mathbf{b^{(1)}}, & \text{if }  \sigma^k\mathbf{b^{(1)}}\succeq \mathbf{t},\forall k\ge 0;\\
 		(u_1u_2\dotsb (u_{m_2}-1))^\infty, & \text{if }  m_2=\inf\{k\ge1:\sigma^k\mathbf{b^{(1)}}\prec \mathbf{t}\}<\infty.
	\end{array} \right. \]
 Then $\sigma^k\mathbf{b^{(2)}}\preceq \mathbf{b^{(2)}}\preceq\mathbf{b^{(1)}},\forall k\ge1$ and  $\mathcal{K}(\mathbf{a^{(1)}},\mathbf{b^{(1)}})=\mathcal{K}(\mathbf{a^{(1)}},\mathbf{b^{(2)}}).$

  Same as step 2 we get a sequence $\mathbf{b^{(1)}}\succ \mathbf{b^{(2)}}\succ\dotsb\succ \mathbf{b^{(j)}}=\theta(\mathbf{b^{(j)}})=:\mathbf{b}=\max\mathcal{K}(\mathbf{t},\mathbf{u}).$

\begin{remark} It is easy to see that
\begin{enumerate}
  \item The order of Step 2 and Step 3 can be exchanged.
  \item Replacing $\mathbf{u}$ with $\mathbf{b^{(1)}}$ in Step 2 can reduce the number of repetitions to get $\mathbf{a}$.
  \item Replacing $\mathbf{t}$ with $\mathbf{a}$ in Step 3 can reduce the number of repetitions to get $\mathbf{b}$.
\end{enumerate}
\end{remark}

\begin{exmp}
For $\mathcal{K}(\mathbf{t},\mathbf{u})=\mathcal{K}(0101101110^\infty,111010010001^\infty)$,  we have
\[\mathbf{t}=0101101110^\infty\rightarrow (010110111)^\infty\rightarrow (010111)^\infty\rightarrow (011)^\infty=\mathbf{a},\]
and
\[\mathbf{u}=111010010001^\infty\rightarrow(11101001000)^\infty\rightarrow (110)^\infty=\mathbf{b}.\]
Thus, $\mathcal{K}(\mathbf{t},\mathbf{u})=\mathcal{K}(\mathbf{a},\mathbf{b})=\{(011)^\infty,(101)^\infty,(110)^\infty\}.$
\end{exmp}


Since Proposition \ref{lemma3}  can not be applied to the  sequences in $\{0,1,2\}^{\mathbb{N}} $ and other spaces  with more than 2 symbols (The space $\{0,1,2\}^{\mathbb{N}} $ has 3 symbols.), we have the following question.
\begin{que}
Let $\beta>2$ be a real number. Can we  evaluate the topological entropy and the Hausdorff dimension of $ K(t)$ with $t\in \mathcal{U}$ for $\beta$-transformations?
\end{que}

\section{The local H\"older exponent}

A function $ f\colon I\to \mathbb{R} $ defined on an interval $ I $ is said to be H\"older continuous of exponent  $ \alpha $, if there exists a constant $ C>0 $ such that $ |f(x)-f(y)|\leq C|x-y|^{\alpha} $ for any $ x,y\in I $.
Fix $ t\in I $, the local H\"older exponent of $ f $ at the point $ t $ is given by
\[
\alpha(f,t)=\liminf_{t'\to t}\frac{\ln|f(t')-f(t)|   }{\ln|t'-t| }.
\]
Let $\eta(t)=\dimh(K(t))$.
This section is devoted to studying  the local H\"older exponent of    $\eta(t)$.
Let $ u $ and  $ t $  be two real numbers in $ [0,1] $.
Let $\widetilde{b}(u,\beta)  = (u_i) $  and $ \widetilde{b}(t,\beta) = (t_i) $ be the quasi-greedy $ \beta $-expansion of $ u $ and $ t $ respectively.
Pick
 \[
 m(t,u) =\sup\{k\geq 1\colon t_i=u_i \text{ for any } i\in \{1,2,\dotsc,k \}  \}
 \]
 as the largest length of the same prefix that  $\widetilde{b}(u,\beta)$ and  $\widetilde{b}(t,\beta)$ share with.

\begin{lem}\label{yj}
	 For each $ t_0>0 $, there exists  a constant $ C_1>0 $ such that
	 \[
	 C_1\beta^{-m(t,u) }\leq |t-u|\leq \beta^{-m(t,u)}
	  \]
	  holds for any $ t,u\in \mathcal{U}\cap [t_0,1] $.
\end{lem}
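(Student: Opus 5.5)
Throughout, write $m=m(t,u)$ and assume $t\neq u$, say $t<u$; in the setting $1<\beta<2$ we have $\gamma=1$, so all digits lie in $\{0,1\}$. Since the quasi-greedy expansion is strictly increasing (Lemma~\ref{lem1} and the lemma after it), $\widetilde{b}(t,\beta)\prec\widetilde{b}(u,\beta)$. They share the prefix $t_1\dotsb t_m$, and $t_{m+1}=0$, $u_{m+1}=1$. Then
\[
u-t=\beta^{-m}\bigl(.u_{m+1}u_{m+2}\dotsb-.t_{m+1}t_{m+2}\dotsb\bigr)=\beta^{-m}\bigl(\pi(\sigma^m\widetilde{b}(u,\beta))-\pi(\sigma^m\widetilde{b}(t,\beta))\bigr),
\]
and the task is to bound the bracket above by $1$ and below by a constant $C_1>0$ depending only on $t_0$.

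\textbf{Upper bound.} By Lemma~\ref{alpha} and the description of $\widetilde{\Sigma}_\beta$, every shift of a quasi-greedy expansion is $\preceq\alpha(\beta)$. Hence $\pi(\sigma^m\widetilde b(u,\beta))\le \pi(\alpha(\beta))=1$, using monotonicity of $\widetilde\pi_\beta$. The subtracted term is nonnegative, so the bracket is at most $1$, giving $|t-u|\le\beta^{-m}$. This needs no hypothesis on $\mathcal U$.

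\textbf{Lower bound.} This is where $t,u\in\mathcal{U}\cap[t_0,1]$ is used. By Lemma~\ref{K}, $t\in K(t)$ and $u\in K(u)$, so every orbit point $T_\beta^k(u)$ is either $0$ or $\ge u\ge t_0$. One must handle the subtlety that $\widetilde{b}$ is quasi-greedy while $T_\beta$ is greedy. Because $t_0>0$, these expansions differ only at $\beta$-rationals, where the quasi-greedy tail is $\alpha(\beta)$, of value $1$; such tails can only improve the bounds. From this:
\begin{itemize}
\item $\pi(\sigma^{m}\widetilde b(u,\beta))=.1u_{m+2}\dotsb\ge \beta^{-1}$, since its first digit is $1$.
\item $.t_{m+2}t_{m+3}\dotsb$ is bounded by $1$, and in fact $\sigma^{m+1}\widetilde{b}(t,\beta)\preceq\alpha(\beta)$, so $\pi(\sigma^{m}\widetilde b(t,\beta))=\beta^{-1}\cdot .t_{m+2}\dotsb\le\beta^{-1}$.
\end{itemize}
These two bounds alone give only a nonnegative difference. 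The point is to show the difference is uniformly positive.

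Compare the two tails past position $m+1$ through their orbit values:
\[
\pi(\sigma^{m+1}\widetilde{b}(u,\beta))=T^{m+1}_\beta u\in\{0\}\cup[t_0,1],
\]
while $\pi(\sigma^{m+1}\widetilde b(t,\beta))=T^{m+1}_\beta t\le 1$.

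A cleaner route is to split off the $1$ at position $m+1$. The bracket equals $\beta^{-1}\bigl(1+T^{m+1}u-T^{m+1}t\bigr)$, with $T$-values read via quasi-greedy tails. It therefore suffices to show $T^{m+1}t\le 1-c$ for some $c=c(t_0)>0$, or else to compensate with $T^{m+1}u\ge t_0$.

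\textbf{Main obstacle.} The hard step is ruling out that $T^{m+1}t$ is close to $1$ while $T^{m+1}u$ is close to $0$. I would use $t\in K(t)$ together with the structure of $\alpha(\beta)$.
\begin{itemize}
\item If $T^{m+1}t$ is near $1$, then $\sigma^{m+1}\widetilde b(t,\beta)$ shares a long prefix $\alpha_1\dotsb\alpha_L$ with $\alpha(\beta)$.
\item If moreover $T^{m+1}u<t_0$, survival forces $T^{m+1}u=0$. But a quasi-greedy expansion is never eventually zero, so this case means the tail value is actually $1$, which is a contradiction with the tail being small.
\item Hence $T^{m+1}u\ge t_0$, and the bracket is at least $\beta^{-1}t_0$ whenever $T^{m+1}t\le 1$.
\end{itemize}
Since $T^{m+1}t\le1$ always holds, we get $1+T^{m+1}u-T^{m+1}t\ge T^{m+1}u\ge t_0$. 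So $C_1=t_0/\beta$ works, and the real content lies in carefully justifying $T^{m+1}u\ge t_0$ via the quasi-greedy/greedy correspondence and Lemma~\ref{K}.
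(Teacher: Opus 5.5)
Your decomposition is the paper's own. Splitting off digit $m+1$ and using $\pi(\sigma^{m+1}\widetilde b(t,\beta))\le 1$ together with $\pi(\sigma^{m+1}\widetilde b(u,\beta))\ge t_0$ is exactly the paper's claim that $T_\beta^m t$ and $T_\beta^m u$ lie in different branches of $T_\beta^{-1}([t_0,1])$, and it gives the same constant $t_0/\beta$. Your upper bound is fine.

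The gap is the inequality $\pi(\sigma^{m+1}\widetilde b(u,\beta))\ge t_0$. You call it the real content, but you support it only by ``such tails can only improve the bounds'' and ``the tail value is actually $1$'', and both claims are wrong. Suppose $u\in\mathbb{Q}_{(\beta)}$ has greedy expansion $u_1\dotsb u_{n-1}10^\infty$, so that $\widetilde b(u,\beta)=u_1\dotsb u_{n-1}0\,\alpha(\beta)$. Its $n$-th digit is $0$ and its $(m+1)$-th digit is $1$, so $m+1\neq n$. If $m+1>n$, the relevant tail is $\sigma^{j}\alpha(\beta)$ with $j=m+1-n\ge 1$. Its value is not $1$, and survival says nothing about it, since survival only gives $T_\beta^{m+1}u=0$ for the greedy orbit.

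Here is a concrete case. Take $\beta=1.1$, $u=\beta^{-1}$ and $t=t_0=\beta^{-3}$.
\begin{itemize}
\item The greedy orbits are $\beta^{-1},0$ and $\beta^{-3},\beta^{-2},\beta^{-1},0$. Hence $u\in K(u)$ and $t\in K(t)$, and both lie in $\mathcal U\cap[t_0,1]$ by Lemma~\ref{K}.
\item We have $\widetilde b(u,\beta)=0\,\alpha(\beta)$ and $\widetilde b(t,\beta)=000\,\alpha(\beta)$ with $\alpha(\beta)_1=1$, so $m=1$.
\item Then $\pi(\sigma^{2}\widetilde b(u,\beta))=\pi(\sigma\alpha(\beta))=\beta-1=0.1<t_0\approx 0.75$.
\item Worse, $|t-u|\beta^{m}=1-\beta^{-2}\approx 0.17<t_0/\beta\approx 0.68$. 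So the constant $C_1=t_0/\beta$ is simply false.
\end{itemize}
This is precisely the configuration your ``main obstacle'' paragraph claims to exclude: the tail of $t$ is $1/\beta\approx 0.91$ and the tail of $u$ is $0.1$.

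The paper's proof has the same hole. It writes $|t-u|=\beta^{-m}|T_\beta^m t-T_\beta^m u|$ with $m$ defined through quasi-greedy expansions, and assumes both $T_\beta^{m+1}$-values lie in $[t_0,1]$. In the example, $T_\beta u=0$, and the greedy expansions $10^\infty$ and $0010^\infty$ already differ in the first digit.

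Your argument is correct when $u$ has an infinite greedy expansion, or when $m+1<n$. In those cases $\pi(\sigma^{m+1}\widetilde b(u,\beta))=T_\beta^{m+1}u\in(0,1)$, hence it is at least $u\ge t_0$. What is missing is a separate treatment of the case where the larger point $u$ is $\beta$-rational with greedy length $n<m+1$. There you need a lower bound $\pi(\sigma^{j}\alpha(\beta))\ge c(\beta,t_0)>0$, and it has to come from $t\in K(t)$, $u\in K(u)$ and Lemma~\ref{alpha}:
\begin{itemize}
\item Survival with $t,u\ge t_0$ forbids, in either greedy expansion, a block $0^{k}$ with $\beta^{-k}\le t_0$ that occurs before the last nonzero digit.
\item Lemma~\ref{alpha} then constrains where the common block $\alpha(\beta)_1\dotsb\alpha(\beta)_{j-1}$ can sit. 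This needs a case analysis on where the greedy expansion of $t$ terminates.
\end{itemize}
This yields some $C_1=C_1(\beta,t_0)>0$, but not $t_0/\beta$.
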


\begin{proof}
	Set $ m=m(t,u) $.
	 Let $ t=q_1q_2\dotsb q_m  t_{m+1}t_{m+2}\dotsb $  and $ u=q_1q_2\dotsb q_m u_{m+1}u_{m+2}\dotsb $ be the quasi-greedy $ \beta $-expansion of $ t $ and $ u $.
	 As
	 $ |t-u|=|\frac{t_{m+1}-u_{m+1}}{\beta^{m+1}}+\frac{t_{m+2}-u_{m+2}}{\beta^{m+2}}+\dotsb  | $.
	 It is clear that $ |t-u| $ is not more than $ \beta^{-m(t,u) } $.
	 Note that
	  \[
	 T_{\beta}^{-1}([t_0,1])\subset \bigsqcup_{k=0}^{\gamma-1}\bigg[\frac{t_0+k}{\beta}, \frac{1+k}{\beta} \bigg]\bigcup\bigg[\frac{\gamma}{\beta},1\bigg),
	 \]
	and $ u_{m+1}\neq t_{m+1} $,  the point $ T_{\beta}^m(t) $ and $ T_{\beta}^m(u) $ can not be in the same interval.
	Therefore, $ |T_{\beta}^m(t)-T_{\beta}^m(u) |\geq \frac{t_0}{\beta} $ and $ |t-u|=\beta^{-m}|T_{\beta}^m(t)-T_{\beta}^m(u)|\geq C_1\beta^{-m}  $ with $ C_1=\frac{t_0}{\beta} $.
	\end{proof}

\begin{proof}[Proof of Theorem \ref{main2}]
Let $ \widetilde{b}(t,\beta)=(t_i) $ be the quasi-greedy  $ \beta $-expansion of $t$  and $ \alpha(\beta)=\alpha(\beta)_i $ be the quasi-greedy  $ \beta $-expansion of $ 1 $.

   In the case when $ t $ is in $ (\frac{\gamma}{\beta},1 ]\cap \mathcal{U} $,
 the number $ t_1$  must be equal to $ \gamma $ and  $\gamma=\alpha(\beta)_1=\gamma $.
For any $ x\in [t,1) $, set $ \widetilde{b}(x,\beta)=(x_i) $ be the the quasi-greedy  $ \beta $-expansion of $ x $. Note that $ x_1=\gamma $ since $ x\geq t $. As $ x $ is less than $ 1 $, there must exists some positive integer $ k>1 $ such that $ x_k<\alpha(\beta)_k\leq \gamma $.
So, $ T_{\beta}^{k-1}(x)<t $    implies that $ K(t)=\{0\} $ and $ \eta(t)=\alpha(\eta,t)=0 $.

For another case when $ t  $ is in $ [0,\frac{\gamma}{\beta}]\cap \mathcal{U} $.
 Set
\[
P_t(x)=\sum_{n=1}^{\infty}(\alpha(\beta)_n-t_n){x^{n}}.
\]
By calculation,  $ P_t(0)=0 $, $ \lim_{x\to 1^-}P_t(x)\geq 1 $ and $ 0< P_t'(x)\leq \frac{2\gamma}{(1-x)^2} $ for any $ x\in (0,1) $.
To sum up, there must exists only one solution $ \lambda(t) $ in $ (0,1] $ and $ \lambda(t) $ is always a simple root of  the equation $ P_t(x)=1 $.
Note that the topological entropy of $K(t)$ can not be more than $\ln \beta$ that is the topological entropy of the whole system, then $  \lambda(t)  $ is larger than $\frac{1}{\beta}$ by Theorem~\ref{main2} and Lemma~\ref{dimension-entropy}.
The function $ \eta(t)=-\frac{\ln \lambda(t)}{\ln \beta} $
	has the property that there exists two    constants $ C_1=\frac{1}{\ln\beta} >0 $ and $ C_2=\frac{\beta}{\ln\beta}>0 $ such that
	\[
	C_1|\lambda(v)-\lambda(u)|\leq  |\eta(v)-\eta(u)|\leq C_2|\lambda(v)-\lambda(u)|
	\]
	for any $\frac{1}{\beta} \leq \lambda(u)<\lambda(v)\leq 1$,
	because
      \[
		 \ln \lambda(v)-\ln \lambda(u)=\ln\frac{\lambda(v)}{\lambda(u)} <\frac{\lambda(v)}{\lambda(u)}-1\leq  \beta|\lambda(v)-\lambda(u)| \]
		 and
		 \[
		  \ln \lambda(v)-\ln \lambda(u)=\ln\frac{\lambda(v)}{\lambda(u)}>1-\frac{\lambda(u)}{\lambda(v)}\geq|\lambda(v)-\lambda(u)|.
		 \]

    When $ t\in [0,\frac{\gamma}{\beta}] \cap \mathcal{U}  $, we pick up two numbers $ v  $, $ u\in [0,\frac{\gamma}{\beta}] \cap \mathcal{U} $ sufficiently close to $ t $.
    Let $ \widetilde{b}( v,\beta)=(v_i) $ be its quasi-greedy  $ \beta $-expansion and $ \widetilde{b}( u,\beta)=(u_i) $ be the quasi-greedy  $ \beta $-expansion of $ u $.
    Set $ P_1(x)=P_{v}(x) $, $ P_2(x)=P_{u}(x) $,$ \lambda_1=\lambda(v) $ and $ \lambda_2=\lambda(u) $.
    Assume that there exists a positive real number $ \delta=\delta(\lambda_1,\lambda_2)>0 $ such that $ \lambda_1\leq 1-\delta $ and $ \lambda_2\leq 1-\delta $.
    note that $ P_2(\lambda_2)=P_1(\lambda_1)=1 $ and  applying Lagrage's theorem, there exists some $ \xi\in[\lambda_1,\lambda_2] $ such that
    \[
    P_1(\lambda_1)-P_2(\lambda_1)=P_2(\lambda_2)-P_2(\lambda_1)=P_2'(\xi)(\lambda_2-\lambda_1)
    \]
	holds.
	We can also write $ P_1(\lambda_1)-P_2(\lambda_1) $ as the power series as follows,
	\[
	P_1(\lambda_1)-P_2(\lambda_1)=\lambda_1^{m+1}R(v,u),
	\]
	where $ R(v,u)=(u_{m+1}-v_{m+1})+\sum_{j=m+2}^{+\infty} (u_j-v_j )\lambda_1^{j-m-1}  $ and $ m=m(v,u) $.
	By comparing the two previous equations we get
	\[
      |\lambda_1-\lambda_2|=\lambda_1^{m+1}\frac{|R(v,u) | }{ P_2'(\xi)}\leq \lambda_1^{m+1}\frac{\gamma}{1-\lambda_1}.
	\]
	We can obtain $m\leq \frac{\ln |v-u| }{-\ln \beta }  $ by
	using the upper bound for $ |v-u| $ in Lemma~\ref{yj}.
	Then, for each $ t\in  [0,\frac{\gamma}{\beta}]\cap\mathcal{U} $, there exists a constant $ \frac{\gamma}{\delta}>0 $ such that
	\[
	|\lambda_1-\lambda_2|\leq \lambda_1^{m+1}\frac{\gamma}{1-\lambda_1}=e^{\ln|v-u|\cdot \frac{-\ln \lambda_1}{\ln \beta }   }\frac{\gamma}{\delta}=|v-u|^{\frac{-\ln \lambda_1}{\ln \beta }}\frac{\gamma}{\delta}
	=|v-u|^{\eta(t)}\frac{\gamma}{\delta}
	\]
	holds for each $ v,u \in \mathcal{U}$ sufficiently close to $ t $.
	Since $ \lambda(t) $ is constant on the complement of $ \mathcal{U} $, the above supper bound actually works for any $ v,u $ close to $ t $.
	Hence,
  \begin{align*}
      \alpha(\eta,t)=&\liminf_{t'\to t}\frac{\ln|\eta(t')-\eta(t)| }{\ln|t'-t|}\\
      \geq & \frac{\ln|\lambda(t')-\lambda(t)|+\ln C_1 }{\ln|t'-t|}\\
      = & \frac{\ln|\lambda(t')-\lambda(t)|+\ln (\frac{1}{\ln\beta} )  }{\ln|t'-t|}\\
      >&\frac{\ln|\lambda(t')-\lambda(t)|-\ln \frac{\gamma}{\delta} }{\ln|t'-t|}
      \geq\eta(t)
  \end{align*}
	holds for any $ t\in\mathcal{U} $ because $\gamma=1$ when $1<\beta<2$.
\end{proof}

The following example is interesting and satisfies that $\alpha(\eta,t)>\eta(t) $.
\begin{exmp}
Let $1<\beta<2$ satisfying $b(1,\beta)=11101$ and $t\in(0,1)$ with $b(t,\beta)=01$. To calculate $\eta(t)=\dim_H(K(t))$ we consider the following sub-shift
\[\mathcal{K}=\mathcal{K}(010^\infty,111010^\infty)=\mathcal{K}((01)^\infty,(110)^\infty)=\mathcal{F}\{00,111\},\]
where $\mathcal{F}\{00,111\}$ denotes the finite type sub-shift forbidding the words $00$ and $111$.
There are two ways to calculate the entropy of the sub-shift. Using Theorem \ref{entropy} we get that $h_{top}(\mathcal{K},\sigma)=-\ln\lambda$, where $\lambda\in (0,1)$ is the solution of $x+x^5=1$.
As well as, applying the method for finite type sub-shifts the number $\lambda$ should be the solution of $x^2+x^3=1$.
 Assuming  $\lambda^2+\lambda^3=1$, we have \[\lambda+\lambda^5=\lambda+\lambda^2(1-\lambda^2)=\lambda+\lambda^2-\lambda^4=\lambda+\lambda^2-\lambda(1-\lambda^2)=\lambda^2+\lambda^3=1.\]
 Note that the solution in $(0,1)$ exists uniquely for each equations, the results obtained by the two methods are consistent.

Clearly, $t=.01\in\mathcal{U}$. Let's consider $r:=.00111$ and computer $\eta(r)=\dim_H(K(r))$. 
We have the following translation
\[ \mathcal{K}(001110^\infty,111010^\infty)=\mathcal{K}((00111)^\infty,(11100)^\infty).\]
We get the related equation $\frac{x+x^2-x^4-x^5}{1-x^5}=1$ and the simplification $(x-1)(x^2+x^3-1)=0$. So the smallest root in $(0,1)$ is just $\lambda$
and $\eta(r)=\frac{-\ln\lambda}{\ln\beta}$. In fact, since the function $\eta(\cdot)$ is decreasing on $[0,1)$  
we get  $\eta(\cdot)\equiv \frac{-\ln\lambda}{\ln\beta}$  on $(.00111,.(01)^\infty)$ and so $\alpha(\eta,t)=+\infty>\eta(t).$
\end{exmp}


\noindent \textbf{Acknowledgments:}
The authors would like to thank Prof. Weisheng Wu  for some useful discussions on this topic.

\end{document}